\newtheorem{thm}{Theorem}
\newtheorem{prop}{Proposition}
\newtheorem{dfn}{Definition}
\newtheorem{lemma}{Lemma}
\newtheorem{cor}{Corollary}
\numberwithin{equation}{section}
\begin{document}
\date{}

\title{Uniqueness of solutions for a nonlocal elliptic eigenvalue problem}

\author{Craig Cowan\\
{\it\small Department of Mathematical Sciences}\\
{\it\small University of Alabama in Huntsville}\\
{\it\small 258A Shelby Center}\\
\it\small Huntsville, AL 35899 \\
{\it\small ctcowan@stanford.edu}\vspace{1mm}\and
Mostafa Fazly\thanks{This work is supported by a University Graduate Fellowship and is part of the second author's Ph.D. dissertation in preparation under the supervision of Professor N. Ghoussoub.}\hspace{2mm}\\
{\it\small Department of Mathematics}\\
{\it\small University of British Columbia,}\\
{\it\small Vancouver BC Canada V6T 1Z2}\\
{\it\small  fazly@math.ubc.ca}\\
}

\maketitle

\def\L{ {\mathcal{L}}}
\def\d{ \partial }
\def\Na{{\mathbb{N}}}

\def\Z{{\mathbb{Z}}} 

\def\IR{{\mathbb{R}}}

\def\CC{{ C_c^\infty(\Omega)}}

\def\XX{{  \left\{ \psi \in C^2(\overline{\Omega}) :  \psi =0 \quad \pOm \right\} }}

\def\pOm{ \partial \Omega}

\def\HA{ (-\Delta)^\frac{1}{2}}

\def\HAN{ (-\Delta)^{-\frac{1}{2}}}

\newcommand{\E}[0]{ \varepsilon}

\newcommand{\la}[0]{ \lambda}

\newcommand{\s}[0]{ \mathcal{S}}

\newcommand{\AO}[1]{\| #1 \| }

\newcommand{\BO}[2]{ \left( #1 , #2 \right) }

\newcommand{\CO}[2]{ \left\langle #1 , #2 \right\rangle}

\newcommand{\co}[1]{ #1^{\prime}}

\newcommand{\p}[0]{ p^{\prime}}

\newcommand{\m}[1]{   \mathcal{ #1 }}

\begin{abstract}  We examine equations of the form 
\begin{eqnarray*}
\left\{ \begin{array}{lcl}
\hfill  \HA u &=& \lambda g(x) f(u) \qquad \text{in}\  \Omega \\
\hfill u&=& 0 \qquad \qquad \qquad  \text{on}\   \pOm,
\end{array}\right.
  \end{eqnarray*} where $ \lambda >0$ is a parameter and $ \Omega$ is a smooth  bounded domain in $ \IR^N$, $ N \ge 2$.  Here $ g$ is a positive function and $ f$ is an increasing, convex function with $ f(0)=1$  and either $ f$ blows up at $1$ or $ f$ is superlinear at infinity.   We show that the extremal solution $u^*$ associated with the extremal parameter $ \lambda^*$ is the unique solution.  We also show that when $f$ is suitably supercritical and $ \Omega$ satisfies certain geometrical conditions then there is a unique solution for small positive $ \lambda$.

\end{abstract}

\section{Introduction}

We are interested in the following  nonlocal eigenvalue problem 
\begin{eqnarray*}
\hbox{$(P)_\lambda$}\hskip 50pt \left\{ \begin{array}{lcl}
\hfill  \HA u &=& \lambda g(x) f(u) \qquad \text{in}\  \Omega \\
\hfill u&=& 0 \qquad \qquad \qquad  \text{on}\   \pOm ,
\end{array}\right.
  \end{eqnarray*}
 where $ (-\Delta)^\frac{1}{2}$ is the square root of the Laplacian operator,   $ \lambda > 0$ is a parameter, $ \Omega$ is a smooth  bounded domain in  $ \IR^N$ where $ N \ge 2$,   and where $ 0 < g(x) \in C^{1,\alpha}(\overline{\Omega})$ for some $ 0 < \alpha$.     The nonlinearity  $f$ satisfies  one of the following two conditions:

(R) \quad $f$ is smooth, increasing and convex on  $\mathbb{R}$ with $ f(0)=1$ and $ f$ is superlinear at $ \infty$ (i.e. $ \displaystyle \lim_{t \rightarrow \infty} \frac{f(t)}{t}=\infty$),  or 

(S) \quad $f$ is smooth, increasing, convex on $[0, 1)$ with $ f(0)=1$ and $\displaystyle  \lim_{t \nearrow 1} f(t)=+\infty$.

In this paper we  prove there is a unique solution of $(P)_\lambda$ for two parameter ranges:  for small $ \lambda$ and for $ \lambda=\lambda^*$ where $ \lambda^*$ is the so called extremal parameter associated with $(P)_\lambda$. First, let us to recall various known facts concerning the second order analog  of $ (P)_\lambda$.   \\

\textbf{Some notations:}  Let $F(t):=\int_0^t f(\tau) d \tau$ and $ C_f:=\int_0^{a_f} f(t)^{-1} dt$ where $ a_f= \infty$ (resp. $ a_f=1$) when $ f$ satisfies (R)  (resp. $f$ satisfies (S)).  We say a positive function $f$ defined on an interval $I$ is logarithmically convex (or log convex) provided $ u \mapsto \log( f(u) )$ is convex on $I$.   Also, $ \Omega$ will always denote a smooth bounded domain in $ \IR^N$ where $N \ge 2$.

  \subsection{The local eigenvalue problem} 
For a nonlinearity $ f$ which satisfies (R) or (S), the following  second order analog of $(P)_\lambda$ with the Dirichlet boundary conditions 
  \begin{eqnarray*}
\hbox{$(Q)_{\lambda}$}\hskip 50pt \left\{ \begin{array}{lcl}
\hfill   -\Delta u  &=& \lambda f(u) \qquad \text{in}\ \Omega \\
\hfill u&=& 0 \qquad\qquad \text{on}\ \pOm ,
\end{array}\right.
  \end{eqnarray*}
  is by now quite well understood whenever $ \Omega$ is a bounded smooth domain in $ \mathbb{R}^N$. See, for instance, \cite{bcmr,BV,Cabre,CC,egg,mems_book,gg,martel,MP,Nedev}. We now list the  properties one comes to expect when studying $(Q)_{\lambda}$.

   It is well known that there exists a critical parameter  $ \lambda^* \in (0,\infty)$ such that for all $ 0<\lambda < \lambda^*$ there exists a smooth, minimal solution $u_\lambda$ of $(Q)_\lambda$.  Here the  minimal solution means in the pointwise sense.  In addition for each $ x \in \Omega$ the map $ \lambda \mapsto u_\lambda(x)$ is increasing in $ (0,\lambda^*)$.   This allows one to define the pointwise limit $ u^*(x):= \lim_{\lambda \nearrow \lambda^*} u_\lambda(x)$  which can be shown to be a weak solution, in a suitably defined sense, of $(Q)_{\lambda^*}$.
   It is also known that for $ \lambda >\lambda^*$ there are no weak solutions of $(Q)_\lambda$.  Also, one can show that the minimal solution $ u_\lambda$  is a semi-stable solution of $(Q)_\lambda$  in the sense that
   \[ \int_\Omega \lambda f'(u_\lambda) \psi^2 \le \int_\Omega | \nabla \psi|^2, \qquad \forall \psi \in H_0^1(\Omega).\]

   We now come to the results known for $(Q)_\lambda$ which we are interested in extending  to $(P)_\lambda$.
  In \cite{martel} it was shown that the extremal solution $u^*$ is the unique weak solution of $(Q)_{\lambda^*}$.  Some of the techniques involve using concave cut offs which do not seem to carry over to the nonlocal setting.    Here we use some techniques developed in \cite{berc} which were used in studying a fourth order analog of $(Q)_\lambda$.   In  \cite{ddgm} the uniqueness of the extremal solution for $ \Delta^2 u = \lambda e^u$ on radial domains with Dirichlet boundary conditions was shown and this was extended to log convex (see below) nonlinearities in \cite{luo}.  Some of the methods used in \cite{luo} were inspired by the techniques of \cite{berc} and so will ours in the case where $ f$ satisfies (R).  In \cite{cog} it was shown that the extremal solution associated with $ \Delta^2 u = \lambda (1-u)^{-2}$ on radial domains is unique and our methods for nonlinearities satisfying (S) use some of their techniques.   
  
 In \cite{jmq} and \cite{schmitt}  a  generalization of $(Q)_\lambda$ was examined.  They
showed that if $f$ is suitably supercritical at infinity and if $ \Omega$ is a star-shaped domain, then for small $ \lambda>0$ the minimal solution is the unique solution of $(Q)_\lambda$.  In \cite{eg} this was done for a particular nonlinearity $f$  which satisfies (S).   One can weaken the   star-shaped assumption and still have uniqueness, see  \cite{shaaf},  but we do not pursue this approach here.   In section 3 we extend these results to $(P)_\lambda$.     For more results  the on uniqueness of solutions for various elliptic problems involving parameters, see \cite{ds}.

For questions on the regularity of the extremal solution in fourth order problems we direct the interested reader to  \cite{ceg}. 
We also mention the recent preprint \cite{craig_10} which examines the same issues as this paper but for equations of the form $ \Delta^2 u = \lambda f(u)$  in $ \Omega$ with either the Dirichlet boundary conditions $ u=| \nabla u|=0$  on $ \pOm$ or the Navier boundary conditions $ u=\Delta u =0$ on $ \pOm$.    Elliptic systems of the form $ -\Delta u = \lambda f(v)$, $ -\Delta v = \gamma g(u)$ in $ \Omega$ with $ u=v=0$ on $ \pOm$ are also examined.

  \subsection{The nonlocal eigenvalue problem} \label{non} 
We first give the needed background regarding $\HA$ to examine $(P)_\lambda$, for a more detailed background see \cite{ct}.  In \cite{cdds} they examined the problem $(P)_\lambda$ with $ (-\Delta)^s$ replacing $ \HA$ and with $ g(x)=1$.  They did not investigate the questions we are interested in but they did develop much of the needed theory to examine $(P)_\lambda$ and so we will use many of their results.

  There are various ways to make sense of $ \HA u$.     Suppose that $ u(x)$ is a smooth function defined in $ \Omega$ which is zero on $ \pOm$   and suppose that $ u(x) = \sum_k a_k \phi_k(x)$  where $ (\phi_k, \lambda_k)$ are the eigenpairs of $ -\Delta$ in $H_0^1(\Omega)$ which are $L^2$ normalized.    Then  one defines
  \[ \HA u(x) =  \sum_k a_k \sqrt{ \lambda_k} \phi_k(x).\]    Another way is to suppose we are given $ u(x)$ which is zero on $ \pOm$ and  we let $ u_e=u_e(x,y)$ denote a solution of
  \begin{eqnarray*}
 \left\{ \begin{array}{lcl}
\hfill  \Delta u_e &=& 0 \qquad \qquad \text{in}\ \ \mathcal{C}: =\Omega \times (0,\infty)  \\
\hfill  u_e&=& 0 \qquad \qquad \text{on}\ \ \partial_L \mathcal{C}:= \pOm \times (0,\infty) \\
 \hfill   u_e&=& u(x) \qquad \  \text{in}\ \ \Omega \times \{ 0\}.
\end{array}\right.
  \end{eqnarray*}
    Then we define
  \[ \HA u(x) = \partial_\nu u_e(x,y) \big|_{y=0},\]  where $ \nu$ is the outward pointing normal on the bottom of the cylinder, $ \mathcal{C}$.          We call $ u_e$ the harmonic extension of $u$.   We define  $H^1_{0,L}(\mathcal{C})$ to be the completion of $ C_c^\infty( \Omega \times [0,\infty))$ under the norm $ \| u\|^2:= \int_{\mathcal{C}} | \nabla u|^2$.
When working on the cylinder generally we will write integrals of the form $ \int_{\Omega \times \{y=0\}} \gamma(u_e)$ as $ \int_\Omega \gamma(u)$.

  Some of our results require one to examine quite weak notions of solutions to $(P)_\lambda$ and so we begin with our definition of a weak solution.

 \begin{dfn}  Given $ h(x) \in L^1(\Omega)$ we say that $ u \in L^1(\Omega) $ is a weak solution of
 \begin{eqnarray} \label{linear}
 \left\{ \begin{array}{lcl}
\hfill  \HA u &=& h(x) \qquad \text{in}\ \Omega \nonumber  \\
\hfill   u&=& 0 \qquad \quad \text{on}\ \pOm,
\end{array}\right.
  \end{eqnarray}
 provided that
 \[  \int_\Omega u  \psi = \int_\Omega h(x) \HAN \psi \qquad \forall  \psi \in C_c^\infty(\Omega).\]

 \end{dfn}  
 \noindent Here $ \HAN \psi$ is given by the function $ \phi$ where
   \begin{eqnarray*}
 \left\{ \begin{array}{lcl}
\hfill   \HA \phi &=& \psi \qquad \text{in}\ \Omega \\
\hfill  \phi&=& 0 \qquad \text{on}\ \pOm.
\end{array}\right.
  \end{eqnarray*}
 
   The following is a weakened  special case of a  lemma taken from \cite{cdds}.  
\begin{lemma}\label{max}
 Suppose that $ h \in L^1(\Omega)$.    Then there exists a unique weak solution $u$ of (\ref{linear}).  Moreover if $ 0 \le h$ a.e. then $ u \ge 0$ in $ \Omega$.
\end{lemma}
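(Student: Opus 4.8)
The uniqueness assertion is immediate from the definition of weak solution: if $u_1,u_2\in L^1(\Omega)$ both solve (\ref{linear}), then $w:=u_1-u_2$ satisfies $\int_\Omega w\,\psi=0$ for every $\psi\in\CC$, hence $w=0$ a.e. in $\Omega$. The real content is therefore the existence of a weak solution together with the sign-preserving property, and the plan is to deduce both from the corresponding statements for smooth data — which are classical and are exactly the special cases recorded in \cite{cdds} — by an $L^1$ density argument.

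The estimate on which everything rests is a duality bound. For $\psi\in\CC$ let $\phi=\HAN\psi$, i.e. $\HA\phi=\psi$ in $\Omega$ and $\phi=0$ on $\pOm$; by the regularity theory in \cite{cdds} — equivalently, by the integrability of the Green function of $\HA$, which holds since $N\ge2$ — one has $\|\phi\|_{L^\infty(\Omega)}\le C\|\psi\|_{L^\infty(\Omega)}$ with $C=C(\Omega)$. For $h\in C_c^\infty(\Omega)$ let $u_h$ be the classical solution of (\ref{linear}) furnished by the spectral construction (or the harmonic extension). Testing against $\psi$ and using the symmetry of $\HA$ on functions vanishing on $\pOm$,
\[ \left|\int_\Omega u_h\,\psi\right|=\left|\int_\Omega h\,\HAN\psi\right|\le\|h\|_{L^1(\Omega)}\|\phi\|_{L^\infty(\Omega)}\le C\|h\|_{L^1(\Omega)}\|\psi\|_{L^\infty(\Omega)}. \]
Taking the supremum over $\psi\in\CC$ with $\|\psi\|_\infty\le1$ yields the a priori bound $\|u_h\|_{L^1(\Omega)}\le C\|h\|_{L^1(\Omega)}$.

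Given $h\in L^1(\Omega)$ one then chooses $h_n\in C_c^\infty(\Omega)$ with $h_n\to h$ in $L^1(\Omega)$, taking $h_n\ge0$ when $h\ge0$ (e.g. by mollification). By linearity and the a priori bound, $\|u_{h_n}-u_{h_m}\|_{L^1(\Omega)}\le C\|h_n-h_m\|_{L^1(\Omega)}\to0$, so $(u_{h_n})$ is Cauchy in $L^1(\Omega)$ and converges to some $u\in L^1(\Omega)$. Passing to the limit in $\int_\Omega u_{h_n}\psi=\int_\Omega h_n\HAN\psi$ — the left side converging because $u_{h_n}\to u$ in $L^1$ and $\psi$ is bounded, the right side because $h_n\to h$ in $L^1$ and $\HAN\psi$ is bounded — shows that $u$ is a weak solution of (\ref{linear}). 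If moreover $h\ge0$ then each $h_n\ge0$, and the maximum principle for smooth data gives $u_{h_n}\ge0$: applying Hopf's lemma to the harmonic extension $(u_{h_n})_e$ on $\mathcal{C}$, a negative minimum of $(u_{h_n})_e$ cannot occur in the interior, nor on the lateral boundary where it vanishes, nor as $y\to\infty$ where it decays, so it would be attained at some $(x_0,0)$ with $x_0\in\Omega$, and there Hopf's lemma forces $\partial_y(u_{h_n})_e(x_0,0)>0$, contradicting $\partial_y(u_{h_n})_e|_{y=0}=-h_n\le0$; hence $u=\lim_n u_{h_n}\ge0$ a.e. The only genuinely nontrivial inputs are the $L^\infty\to L^\infty$ bound for $\HAN$ and the maximum principle for smooth data, both available from \cite{cdds}; once these are in hand the rest is routine, so I expect the main obstacle to be assembling these references correctly rather than proving anything new.
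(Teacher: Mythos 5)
The paper itself offers no proof of this lemma: it is stated as a weakened special case of a result cited from \cite{cdds}, and the argument behind that citation is exactly the duality-plus-$L^1$-density scheme you give (the $\HA$ analogue of the classical Brezis--Cazenave--Martel--Ramiandrisoa construction for $-\Delta$). Your write-up is correct and complete in its essentials: uniqueness from $\int_\Omega w\psi=0$ for all $\psi\in C_c^\infty(\Omega)$, the a priori bound $\|u_h\|_{L^1}\le C\|h\|_{L^1}$ by testing against $\HAN\psi$, the Cauchy-sequence passage to general $h\in L^1(\Omega)$, and the Hopf-lemma sign argument on the harmonic extension with the correct orientation $\partial_y u_e|_{y=0}=-h$. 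The one ingredient you import without proof, $\|\HAN\psi\|_{L^\infty}\le C\|\psi\|_{L^\infty}$, is easily supplied by comparison with the torsion function ($\HA\Phi=1$ in $\Omega$, $\Phi=0$ on $\pOm$, $\Phi$ bounded), so there is no genuine gap.
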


 \begin{dfn} Let $f$ be a nonlinearity satisfying (R).
 \begin{itemize} 
  \item  We say that $ u(x) \in L^1(\Omega)$ is a weak solution of $(P)_\lambda$ provided $ g(x) f(u) \in L^1(\Omega) $, and
 \[ \int_\Omega u \psi = \lambda \int_\Omega  g(x) f(u) \HAN \psi \qquad \forall  \psi \in C_c^\infty(\Omega).\]

 \item We say $ u $ is a regular energy solution of $(P)_\lambda$ provided that $u$ is bounded,  the harmonic extension $u_e$ of $ u$,  is an element of  $ H_{0,L}^1(\mathcal{C})$ and satisfies
 \begin{equation} \label{energy}
 \int_{\mathcal{C}} \nabla u_e \cdot \nabla \phi = \lambda \int_{\Omega } g(x) f(u) \phi,
 \end{equation}  for all $ \phi \in H_{0,L}^1(\mathcal{C})$.

 \item  We say $ \overline{u}$ is a regular energy supersolution of $(P)_\lambda$ provided  that $0 \le \bar{u}$ is bounded,  the harmonic extension of $\bar{u}$ is an element of  $ H_{0,L}^1(\mathcal{C})$ and satisfies  \begin{equation} \label{energy_super}
 \int_{\mathcal{C}} \nabla \overline{u}_e \cdot \nabla \phi \ge  \lambda \int_{\Omega } g(x) f(\overline{u}) \phi,
 \end{equation}  for all $ 0 \le \phi \in H_{0,L}^1(\mathcal{C})$.

 \end{itemize}

 \end{dfn}   
 
 In the case where $f$ satisfies (S) a few minor changes are needed in the definition of solutions.  For a weak solution $u$ one requires that $ u \le 1$ a.e. in $ \Omega$.  For $u$ to be a regular energy solution one requires that $ \sup_\Omega u <1$. 

 We will need the following monotone iteration result, see \cite{cdds}.     Suppose that $ \underline{u}$ and $ \overline{u}$ are regular energy sub and supersolutions of $ (P)_\lambda$.   Then there exists a regular energy solution $u$ of $ (P)_\lambda$   and $ \underline{u} \le u \le \overline{u}$ in $ \Omega$.    By a regular energy subsolution we are using the natural analog  of regular energy supersolution.

We define  the extremal parameter
\[ \lambda^*:= \sup \left\{ 0 \le \lambda:  \mbox{ $ (P)_\lambda$ has a regular energy solution} \right\},\] and we now show some basic properties.

\begin{lemma}\label{basic}

\begin{enumerate}

 \item[(1)] Then $ 0 < \lambda^*$.

\item[(2)]    Then $ \lambda^* < \infty$.

 \item[(3)] For $ 0 < \lambda < \lambda^*$ there exists a regular energy solution $ u_\lambda$ of $(P)_\lambda$ which is minimal and semi-stable.

\item[(4)] For each $ x \in \Omega$ the map  $ \lambda \mapsto u_\lambda(x)$ is increasing on $ (0,\lambda^*)$ and hence  the pointwise limit $ u^*(x):=\lim_{\lambda \nearrow \lambda^*} u_\lambda(x)$ is well defined.   Then $u^*$ is a  weak solution of $ (P)_{\lambda^*}$ and satisfies  $ \int_\Omega g(x) f'(u^*) f(u^*) dx < \infty$.

\end{enumerate}

\end{lemma}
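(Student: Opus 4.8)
The four statements are the standard "minimal branch" package, and I would prove them in the order listed, each feeding into the next.

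For (1), the plan is to produce a regular energy supersolution for small $\lambda$ and then invoke the monotone iteration result from \cite{cdds}. Since $f$ is smooth and (in case (S)) we only need $u<1$, I would take $w$ to be the solution of $\HA w = g(x)$ in $\Omega$, $w=0$ on $\pOm$, which is bounded with bounded harmonic extension in $H^1_{0,L}(\mathcal{C})$ by Lemma \ref{max} and elliptic regularity (using $g \in C^{1,\alpha}$). Then $\overline{u} := \lambda C w$ satisfies $\HA \overline{u} = \lambda C g(x) \ge \lambda g(x) f(\overline{u})$ as soon as $C \ge \sup_{[0,\lambda C \|w\|_\infty]} f$, which can be arranged for $\lambda$ small by continuity of $f$ and $f(0)=1$; together with the subsolution $\underline{u}=0$ this gives a regular energy solution, so $\lambda^* > 0$.

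For (2), I would use the first eigenpair $(\phi_1,\mu_1)$ of $\HA$ on $\Omega$ (with $\phi_1 > 0$), test \eqref{energy} against the harmonic extension of $\phi_1$, and use $\int_\Omega \HA u \, \phi_1 = \mu_1 \int_\Omega u \phi_1$ to get $\mu_1 \int_\Omega u \phi_1 = \lambda \int_\Omega g(x) f(u)\phi_1$; since $f$ is superlinear (case (R)) or blows up at $1$ (case (S)) while $f(t) \ge f(0) + f'(0)t$ by convexity, one gets $f(u) \ge \delta + \varepsilon u$ with $\varepsilon > 0$ after possibly shrinking, hence $\mu_1 \int u \phi_1 \ge \lambda \varepsilon (\inf g)\int u \phi_1 + \lambda \delta (\inf g) \int \phi_1$, forcing $\lambda \le \mu_1/(\varepsilon \inf g)$, so $\lambda^* < \infty$. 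For (3), minimality and existence of $u_\lambda$ follow by the monotone iteration scheme started from $\underline u = 0$ between $0$ and any solution at a larger parameter $\lambda'<\lambda^*$ (such a solution is a supersolution at $\lambda$); the limit of the iteration is below every solution, hence minimal. Semi-stability of the minimal solution is the one genuinely delicate point: the standard argument constructs, for $\lambda<\lambda'<\lambda^*$, the minimal solutions $u_\lambda < u_{\lambda'}$, considers the linearized operator $L_\phi := \HA \phi - \lambda g(x) f'(u_\lambda)\phi$ on the cylinder, and shows its first eigenvalue is nonnegative — if it were negative one could perturb $u_\lambda$ downward to produce a strictly smaller supersolution, contradicting minimality. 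I would carry this out on the extension, testing against $\phi_e \in H^1_{0,L}(\mathcal{C})$ and using the convexity of $f$; the inequality $\int_\Omega \lambda g f'(u_\lambda)\psi^2 \le \int_{\mathcal{C}} |\nabla \psi_e|^2$ for all $\psi$ is then the assertion of semi-stability.

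For (4), monotonicity of $\lambda \mapsto u_\lambda(x)$ is immediate from minimality (a minimal solution at a smaller parameter is a subsolution at the larger one, and the iteration preserves the ordering), so $u^*(x) := \lim_{\lambda \nearrow \lambda^*} u_\lambda(x)$ exists pointwise. To see $u^*$ is a weak solution one passes to the limit in $\int_\Omega u_\lambda \psi = \lambda \int_\Omega g f(u_\lambda)\HAN\psi$; the left side converges by monotone convergence, and for the right side one needs a uniform $L^1$ bound on $g f(u_\lambda)$, which comes from the $\phi_1$-testing in step (2) (that estimate bounds $\int_\Omega g f(u_\lambda)\phi_1$, and since $\phi_1 \gtrsim d(x,\pOm)$ one upgrades to an $L^1_{loc}$ bound, then to $L^1$ using the boundary behaviour of $\HAN\psi$). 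The estimate $\int_\Omega g f'(u^*)f(u^*)\,dx < \infty$ is the real content here and the expected main obstacle: I would derive it from semi-stability of $u_\lambda$ by choosing the test function $\psi$ in the stability inequality to be (the trace of the harmonic extension of) a function built from $u_\lambda$ itself — typically $\psi = h(u_\lambda)$ for a suitable concave-type $h$, or directly $\psi$ related to $f(u_\lambda)^{1/2}$ — plug into $\int \lambda g f'(u_\lambda)\psi^2 \le \int_{\mathcal{C}}|\nabla \psi_e|^2$, bound the right-hand Dirichlet energy on the cylinder by the energy identity \eqref{energy} with $\phi$ chosen compatibly (this is where the nonlocal setting forces care: one cannot freely use $|\nabla h(u)|^2 = h'(u)^2|\nabla u|^2$ pointwise on $\Omega$, one must work with the extension), and let $\lambda \nearrow \lambda^*$ with Fatou's lemma. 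The difficulty is exactly that the localization/cut-off tricks available for $-\Delta$ (concave cut-offs, as noted in the introduction for \cite{martel}) do not transfer directly, so the bound must be squeezed out of the extension-problem energy inequality with the right choice of test function, and I expect this choice and the attendant estimates to be the crux of the argument.
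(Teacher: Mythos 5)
Parts (1) and (3) follow the paper's route (small supersolution plus monotone iteration; the standard minimal-branch/semi-stability argument deferred to \cite{cdds}), and your proof of (2) by testing against the first eigenfunction of $\HA$ is genuinely different from the paper's: the paper multiplies by $\phi_e/f(u_e)$ on the cylinder and needs $C_f<\infty$, forcing a separate treatment (via part (4)) of nonlinearities like $f(t)=(t+1)\log(t+1)+1$ for which $C_f=\infty$, whereas your linear lower bound $f(t)\ge\delta+\E t$ handles all cases at once. The price is that your absorption step $\sqrt{\lambda_1}\int u\phi_1\ge\lambda\E(\inf_\Omega g)\int u\phi_1$ needs $\inf_\Omega g>0$; this is consistent with the standing hypothesis $0<g\in C^{1,\alpha}(\overline\Omega)$, but note the paper's own argument only uses $\int_\Omega g\phi>0$.

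The genuine gap is in (4): the uniform bound $\int_\Omega g f'(u_\lambda)f(u_\lambda)\le C$ is the entire content of that part, and you explicitly leave the choice of test function and the resulting estimates unresolved ("I expect this choice \dots to be the crux"). A plan that defers the crux is not a proof. For the record, the paper's choice is $\psi=f(u_e)-1$ in the stability inequality; writing $\int_{\mathcal C}|\nabla\psi|^2=\int_{\mathcal C}\nabla(f(u_e)-1)\cdot f'(u_e)\nabla u_e$, integrating by parts using $\Delta u_e=0$, and cancelling against the $\lambda\int_\Omega g f'(u)(f(u)-1)^2$ term yields $\int_{\mathcal C}(f(u_e)-1)f''(u_e)|\nabla u_e|^2\le\lambda\int_\Omega gf'(u)f(u)$; the left side is $\int_{\mathcal C}\nabla H(u_e)\cdot\nabla u_e=\lambda\int_\Omega gf(u)H(u)$ with $H(t)=\int_0^t f''(\tau)(f(\tau)-1)\,d\tau$, and the elementary bound $H(t)\ge(f(T)-1)(f'(t)-f'(T))$ for $T<t$, after splitting $\Omega$ into $\{u\ge T\}$ and $\{u<T\}$ with $f(T)-1$ large, closes the estimate. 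None of the candidates you name ($\psi$ concave in $u_\lambda$, or $\psi\sim f(u_\lambda)^{1/2}$) is shown to produce the product $f'f$ on the correct side of the inequality, so the step that would fail is precisely the one you flag. A secondary point: your passage to the limit for "$u^*$ is a weak solution" via $\phi_1\gtrsim\delta(x)$ only delivers $\delta(x)g f(u^*)\in L^1(\Omega)$, which is weaker than the paper's definition requiring $gf(u^*)\in L^1(\Omega)$; that stronger integrability again comes only from the missing claim.
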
 

In this paper we do not need the notion of a semi-stable solution other than for the proof of (4).  For the definition of a semi-stable solution one can either use a nonlocal notion, see \cite{cdds} or instead work on the cylinder which is what we choose to do.  We say that a regular energy solution $ u$ of $(P)_\lambda$ is semi-stable provided that 
\begin{equation} \label{loc_stable}
\int_{\mathcal{C}} | \nabla \phi|^2 \ge \lambda \int_{\Omega} g(x) f(u) \phi^2 \qquad \forall \phi \in H^1_{0,L}(\mathcal{C}).
\end{equation} We now prove the lemma. \\
\textbf{Proof:} (1)  Let $ \overline{u}$ denote a solution of  $ \HA \overline{u}= t g(x)$ with $ \overline{u}=0$ on $ \pOm$ where $ t>0$ is small enough such that $ \sup_\Omega \overline{u}<1$.       One sees that $ \overline{u}$ is a regular energy supersolution of $ (P)_\lambda$ provided $ t \ge \lambda \sup_\Omega f(\overline{u})$ which clearly holds for small positive $ \lambda$.    Zero is clearly a regular energy subsolution and so we can apply the monotone iteration procedure to obtain a regular energy solution and hence $ \lambda^* >0$.

(2) Suppose that either $ f$ satisfies (R) and $ C_f <\infty$ or $ f$ satisfies (S) and so trivially $ C_f < \infty$.  

 Let $ u $ denote a regular energy solution of $ (P)_\lambda$ and let $ u_e$ denote the harmonic extension.    Let $ \phi$ denote the first eigenfunction of $ -\Delta $ in $H_0^1(\Omega)$ and let $ \phi_e$ be its harmonic extension;  so $ \phi_e(x,y)= \phi(x) e^{- \sqrt{\lambda_1} y}$.     Multiply $ 0 = - \Delta u_e$  by $  \frac{\phi_e}{f(u_e)}$ and integrate this over  the cylinder $ \mathcal{C}$ to obtain
\[ \int_{\Omega } \lambda g(x) \phi = \int_{\mathcal{C}} \frac{\nabla u_e \cdot \nabla \phi_e}{f(u_e)} - \int_{\mathcal{C}} \frac{ | \nabla u_e|^2 \phi_e f'(u_e)}{f(u_e)^2}, \]  and note that the second integral on the right is nonpositive and hence we can rewrite this as

\[ \int_{\Omega } \lambda g(x) \phi  \le \int_{\mathcal{C}} \nabla \phi_e \cdot \nabla h(u_e),\]  where $ h(t) = \int_0^t \frac{1}{f(\tau)} d \tau$.  Integrating the right hand side by parts we have that it is equal to $ \int_{\Omega } \HA \phi h(u)$  which is equal to $ \sqrt{\lambda_1} \int_{\Omega } \phi h(u)$.  So $ h(u) \le C_f$ and hence we have
\[\lambda  \int_{\Omega }  g(x) \phi  \le \sqrt{\lambda_1}  C_f \int_{\Omega } \phi.\]
This shows that $ \lambda^* < \infty$.    The case where $ f$ satisfies (R) and  where $ C_f = \infty$ needs a separate proof, see the proof of (4).   Note that there are  examples of $f$ which satisfy (R) and for which $C_f=\infty$,  for example $ f(t):= (t+1) \log(t+1)+1$.

(3)  The proof in the case where $ g(x)=1$ also works here, see \cite{cdds}.

(4)  Again the proof used in the case where $ g(x)=1$ works to show the monotonicity of $ u_\lambda$, see \cite{cdds},  and hence $ u^*$ is well defined.   One should note that our notion of a weak solution is more restrictive than what is typically used, ie.  we require $ g(x) f(u) \in L^1(\Omega)$ where typically one would only require that $ \delta(x) g(x) f(u)  \in L^1(\Omega)$ where $ \delta(x)$ is the distance from $x$ to $\pOm$.    Hence here our proof will differ from \cite{cdds}.     \\
     Claim:    There exists some $  C < \infty$ such that  
\begin{equation} \label{cla}
\int_\Omega g(x) f'(u_\lambda) f(u_\lambda) \le C, 
\end{equation} for all $ 0 < \lambda < \lambda^*$  (at this point we are allowing for the possibility of $ \lambda^* = \infty$).    We first show that the claim implies that $ \lambda^* < \infty$.   Note that if $\HA \phi = g(x)$ with $ \phi =0 $ on $ \pOm$  then an application of the maximum principle along with the fact that $ f(u_\lambda) \ge 1$ gives  $ u_\lambda \ge \lambda \phi$ in $ \Omega$.   This along with (\ref{cla}) rules out the possibility of $ \lambda^*=\infty$.    Using a proof similar to the one in  \cite{cdds} one sees that $u^*$ is a weak solution to $(P)_{\lambda^*}$ except for the extra integrability condition $ g(x) f(u^*) \in L^1(\Omega)$ that we require.   But sending $ \lambda \nearrow \lambda^*$ in (\ref{cla}) gives us the desired regularity and we are done. \\
 We now prove the claim.   Let $ u=u_\lambda$ denote the minimal solution of $(P)_\lambda$ and let $ u_e$ denote its harmonic  extension.   Take   $ \psi:= f(u_{e})-1$ in (\ref{loc_stable}) ($\psi$ can be shown to be an admissible test function) and write the right hand side as 
\[ \int_{\mathcal{C}} \nabla ( f(u_e)-1) f'(u_e) \cdot  \nabla u_e,\] and integrate this by parts.   Using $(P)_\lambda$ and after some cancellation one arrives at 
\begin{equation} \label{999}
 \int_{\mathcal{C}} (f(u_e)-1) f''(u_e) | \nabla u_e|^2 \le \lambda \int_\Omega g(x) f'(u) f(u).
 \end{equation} 
Define $ H(t):= \int_0^t f''(\tau) ( f(\tau)-1) d \tau $  and so the left hand side of (\ref{999}) can be written as $ \int_{\mathcal{C}} \nabla H(u_e) \cdot \nabla u_e$ and  integrating this by parts gives 
\[ \lambda \int_\Omega g(x) f(u) H(u).\]  Combining this with (\ref{999}) gives 
\begin{equation} \label{1000}
\int_\Omega g(x) f(u) H(u) \le \int_\Omega g(x) f(u) f'(u).
 \end{equation}   To complete the proof we show that $ H(u)$ dominates $ f'(u)$ for big $u$ (resp. $ u$ near $1$) when $f$ satisfies (R)  (resp. (S)).   If $ 0 <T<t$  then one easily sees that 
 \[ H(t) \ge (f(T)-1) ( f'(t) - f'(T)).\]  Using this along with (\ref{1000}) and dividing the domain of $ \Omega$ into regions $ \{ u \ge T\}$ and $ \{u <T \}$ one  obtains the claim.

\hfill $ \Box$

\section{Uniqueness of the extremal solution}\label{section2}

\begin{thm}\label{extremal}  Suppose that either $ f$ satisfies (R) and is log convex or satisfies (S) and is strictly convex.  Then the followings hold.
 \begin{enumerate}
\item[(1)] There are no weak solutions for $(P)_\lambda$ for any $ \lambda > \lambda^*$.

\item[(2)] The extremal solution $u^*$ is the unique weak solution of $(P)_{\lambda^*}$.
\end{enumerate}

\end{thm}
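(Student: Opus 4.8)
The plan is to obtain (1) from a pushing-down principle and (2) from the comparison/semistability structure of the minimal branch combined with a rigidity argument that uses the log convexity (case (R)) resp.\ strict convexity (case (S)) of $f$, not merely plain convexity.

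For (1), suppose $v$ is a weak solution of $(P)_\mu$ with $\mu>\lambda^*$ and fix $\lambda\in(\lambda^*,\mu)$. Since $f\ge f(0)=1$ we have $\HA v=\mu g f(v)\ge\lambda g f(v)$, so $v$ is a weak supersolution of $(P)_\lambda$. Iterating $\HA u_{n+1}=\lambda g f(u_n)$ from $u_0=0$ and using Lemma~\ref{max} with the monotonicity of $f$ produces an increasing sequence $0\le u_n\le v$ converging to a weak solution $\underline u\le v$ of $(P)_\lambda$. One then upgrades $\underline u$ to a regular energy solution: multiplying the equation on the extension cylinder by $\phi_e/f(\underline u_e)$ exactly as in the proof of Lemma~\ref{basic}(2), and exploiting $\lambda<\mu$ together with $\underline u\le v$, yields the integrability needed to bootstrap $\underline u$ to a bounded solution (resp.\ one with $\sup_\Omega\underline u<1$) through the regularity theory for $\HA$. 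This contradicts the definition of $\lambda^*$. (The truncation argument available for the local problem fails here because $\HA$ is nonlocal, which is why this step is run on the cylinder.)

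For (2), let $v$ be a weak solution of $(P)_{\lambda^*}$. For each $\lambda<\lambda^*$, $v$ is a weak supersolution of $(P)_\lambda$, so the iteration above forces the minimal solution $u_\lambda\le v$; letting $\lambda\nearrow\lambda^*$ gives $u^*\le v$, and $w:=v-u^*\ge0$ lies in $L^1(\Omega)$. Each $u_\lambda$ is semistable, and since $u_\lambda\nearrow u^*$ with $f'$ increasing, monotone convergence in the stability inequality (tested against bounded $\phi\in H^1_{0,L}(\mathcal{C})$) gives $\int_{\mathcal{C}}|\nabla\phi|^2\ge\lambda^*\int_\Omega g f'(u^*)\phi^2$ for all $\phi\in H^1_{0,L}(\mathcal{C})$, the right side being finite by Lemma~\ref{basic}(4). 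Subtracting the weak formulations for $v$ and $u^*$, $w$ solves $\HA w=\lambda^* g\bigl(f(v)-f(u^*)\bigr)=\lambda^* g f'(u^*)w+k$ with $k:=\lambda^* g\bigl(f(v)-f(u^*)-f'(u^*)w\bigr)\ge0$ by convexity, and $k>0$ a.e.\ on $\{w>0\}$: immediately from strict convexity in case (S), and in case (R) from log convexity via $f(v)-f(u^*)\ge f(u^*)\bigl(\exp(\tfrac{f'(u^*)}{f(u^*)}w)-1\bigr)>f'(u^*)w$ wherever $w>0$.

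It remains to prove $w\equiv0$. If not, then $\{w>0\}$ has positive measure and $w>0$ in $\Omega$ by the maximum principle; testing the equation for $w$ against the first eigenfunction $\phi_1>0$ of the weighted problem $\HA\phi=\mu\,\lambda^* g f'(u^*)\phi$ (after approximating $\phi_1$ by $C_c^\infty(\Omega)$ functions), and using that its first eigenvalue satisfies $\mu_1\ge1$ by semistability, gives $(\mu_1-1)\int_\Omega\lambda^* g f'(u^*)\,w\,\phi_1=\int_\Omega k\,\phi_1>0$, whence $\mu_1>1$: $u^*$ is \emph{strictly} semistable. This is where the hypotheses enter essentially. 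Following the test-function techniques of \cite{berc} (case (R), via an auxiliary function built from $f(u^*_e)$ that encodes log convexity) and of \cite{cog} (case (S)), strict semistability of $u^*$ together with the integrability $\int_\Omega g f'(u^*)f(u^*)<\infty$ is shown to be incompatible with $u^*$ being extremal --- morally, the linearized operator at $u^*$ becomes invertible and the solution branch continues past $\lambda^*$. The main obstacle is exactly this step, since $u^*$ need not a priori be bounded, so one cannot simply invoke the implicit function theorem; it is here that plain convexity is insufficient and the finer structure together with the borrowed test-function arguments is needed. Hence $w\equiv0$ and $v=u^*$.
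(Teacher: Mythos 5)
Your outline leaves the genuinely hard step of both parts unproved. Everything in this theorem reduces to upgrading a weak ($L^1$) solution, or a weak supersolution with a strict margin $\E$, to a \emph{regular energy} (bounded) solution at a parameter at or beyond $\lambda^*$; in the paper this is exactly Lemma \ref{exist}, and its proof is the technical heart of the argument. There one iterates $\HA u_{k+1}=\mu g(f(u_k)+\E)$, uses Proposition \ref{inequality}(1)(i) to get $u_2\le \frac{1}{2N}(u_1+t\phi)$, and then uses log convexity (writing $f=e^\beta$ with $\beta$ convex, $\beta(0)=0$) to obtain $f(u_2)^{2N}\le f(u_0)f(\tfrac{t\phi}{1-\mu})^{1-\mu}$, whence $gf(u_2)\in L^{2N}(\Omega)$ and the next iterate is bounded by elliptic regularity. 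Your substitute in part (1) --- multiplying by $\phi_e/f(\underline{u}_e)$ on the cylinder and ``bootstrapping'' --- cannot work: that multiplier only controls $\int_\Omega \phi\, h(u)$ with $h(u)=\int_0^u f^{-1}\le C_f$, which is information you already have, and $gf(u)\in L^1(\Omega)$ alone gives no $L^\infty$ bound. This is precisely where log convexity (not plain convexity) enters in case (R), and your argument for (1) never invokes it.

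For (2) you take a genuinely different route --- $u^*\le v$, $w=v-u^*$ solves a linear equation with strictly positive source on $\{w>0\}$, hence strict semistability $\mu_1>1$, hence the branch continues past $\lambda^*$ --- which is essentially Martel's local argument. But the decisive step (``strict semistability \dots is shown to be incompatible with $u^*$ being extremal'') is asserted, not proved; you yourself note that $u^*$ need not be bounded, so the implicit function theorem is unavailable, and the paper states explicitly that Martel's concave-truncation device does not carry over to $\HA$. There are further unaddressed points: existence of a principal eigenfunction for $\HA\phi=\mu\lambda^* g f'(u^*)\phi$ when the weight is merely integrable, and the passage from $w\ge 0$ (Lemma \ref{max}) to $w>0$. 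The paper's proof avoids all of this: it sets $z=\frac{u^*+v}{2}$, uses strict convexity to write $\HA z=gf(z)+gh$ with $h>0$ where $u^*\ne v$, perturbs to $\tau=z+\E\phi-\chi\ge 0$ satisfying $g(f(\tau)+\E)\le \HA\tau\in L^1(\Omega)$, applies Lemma \ref{exist} to produce a regular energy solution of $\HA u=g(f(u)+\tfrac{\E}{2})$, and finally shifts by $\alpha u-\tfrac{\E}{2}\phi$ to manufacture a regular energy supersolution at parameter $(1+\alpha)\lambda^*$, contradicting extremality. To salvage your proposal you must supply a proof of the regularization lemma; once you have it, the eigenvalue detour can be dropped entirely.
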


 The following are some properties that the nonlinearity $f$ satisfies. 
\begin{prop}\label{inequality} 
\begin{enumerate}
 \item[(1)] Let $f$ be a log convex  nonlinearity which satisfies (R).
\begin{itemize} 
\item [(i)] For all  $ 0 < \lambda <1$ and $ \delta >0$ there exists $ k>0$ such that
\[  f(\lambda^{-1} t) + k \ge (1+ \delta) f(t) \ \ \ \ \text{for all} \ \ 0 \le t < \infty . \]
\item [(ii)] Given $ \E>0$ there exists  $  0 < \mu <1$ such that
\[ \mu^2 \left( f( \mu^{-1} t) + \E \right) \ge f(t) + \frac{\E}{2} \ \ \ \ \text{for all} \ \ 0 \le t < \infty .\] 
\item [(iii)]   Then $f$ is strictly convex. 
\end{itemize}
\item[(2)] Let $f$ be a nonlinearity which satisfies (S).

 \begin{itemize}   \item [(i)]  Given $ \E>0$ there exists  $  0 < \mu <1$ such that
\[ \mu \left( f( \mu^{-1} t ) + \E \right) \ge f(t) + \frac{\E}{2} \ \ \ \ \text{for all} \ \ 0 \le t \le \mu.\]

\item [(ii)]  Then $ \lim_{t \nearrow 1} \frac{f(t)}{F(t)}= \infty$ where $ F(t):=\int_0^t f(\tau) d \tau$. 
\end{itemize}

\end{enumerate}
\end{prop}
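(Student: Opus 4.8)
The plan is to derive from log-convexity a single pointwise comparison for part (1), and to use the ordinary convexity of $f$ together with $f(0)=1$ for part (2). Set $\phi:=\log f$. Then $\phi$ is convex with $\phi(0)=\log f(0)=0$, and superlinearity forces $f(t)\to\infty$ (hence $\phi(t)\to\infty$) as $t\to\infty$. Writing $t=\lambda(\lambda^{-1}t)+(1-\lambda)\cdot 0$ and using convexity of $\phi$ gives $\phi(t)\le\lambda\,\phi(\lambda^{-1}t)$ for $0<\lambda<1$ and $t\ge 0$, i.e.
\[ f(\lambda^{-1}t)\ge f(t)^{1/\lambda},\qquad 0\le t<\infty,\ \ 0<\lambda<1. \]
For (1)(i): since $\tfrac{1}{\lambda}-1>0$ and $f(t)\to\infty$, there is $T$ with $f(t)^{1/\lambda-1}\ge 1+\delta$ for $t\ge T$, so $f(\lambda^{-1}t)\ge f(t)^{1/\lambda}=f(t)\,f(t)^{1/\lambda-1}\ge(1+\delta)f(t)$ there; on $[0,T]$ the difference $(1+\delta)f(t)-f(\lambda^{-1}t)$ is bounded above (by $(1+\delta)f(T)$), so any $k$ exceeding that bound works. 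For (1)(iii): since $f>0$ (implicit in $f$ being log convex) and $f$ is increasing, so $f'>0$, writing $f=e^{\log f}$ gives $f''=f\bigl((\log f)''+((\log f)')^2\bigr)\ge f\,(f'/f)^2>0$, hence $f$ is strictly convex.

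The one genuinely computational item — and the main (if mild) obstacle — is (1)(ii). Since $f(t)\ge f(0)=1$ for $t\ge0$, it suffices, with $a:=f(t)\ge 1$ and using the comparison above, to show that $\mu^2\bigl(a^{1/\mu}+\varepsilon\bigr)\ge a+\tfrac{\varepsilon}{2}$ for all $a\ge 1$, equivalently $\mu^2 a^{1/\mu}-a\ge-\varepsilon\bigl(\mu^2-\tfrac12\bigr)$. A one-variable calculus computation gives
\[ \min_{a\ge 1}\bigl(\mu^2 a^{1/\mu}-a\bigr)=-(1-\mu)\,\mu^{-\mu/(1-\mu)}, \]
the minimum being attained at $a_*=\mu^{-\mu/(1-\mu)}\ge 1$. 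As $\mu\nearrow 1$ the right side of this last display tends to $0$, while $-\varepsilon\bigl(\mu^2-\tfrac12\bigr)\to-\tfrac{\varepsilon}{2}<0$; hence the required inequality holds for every $a\ge 1$ once $\mu$ is chosen close enough to $1$ (one may read off an explicit admissible $\mu$ if desired).

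For part (2) the tool is the convexity of $f$ on $[0,1)$. For (2)(i): for $\mu\in(0,1)$ and $0\le t\le\mu$ we have $\mu^{-1}t\in[0,1]$ (the endpoint $t=\mu$ being trivial since there $f(\mu^{-1}t)=+\infty$), and convexity gives $f(t)=f\bigl(\mu\cdot\mu^{-1}t+(1-\mu)\cdot 0\bigr)\le\mu f(\mu^{-1}t)+(1-\mu)$, so $\mu\bigl(f(\mu^{-1}t)+\varepsilon\bigr)\ge f(t)-(1-\mu)+\mu\varepsilon\ge f(t)+\tfrac{\varepsilon}{2}$ provided $\mu(1+\varepsilon)\ge 1+\tfrac{\varepsilon}{2}$, i.e. $\mu\in\bigl[\tfrac{1+\varepsilon/2}{1+\varepsilon},\,1\bigr)$, a nonempty interval. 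For (2)(ii): fix $t_1\in(0,1)$; for $t_1<t<1$, monotonicity of $f$ gives $F(t)=F(t_1)+\int_{t_1}^t f\le F(t_1)+(1-t_1)f(t)$, hence $F(t)/f(t)\le F(t_1)/f(t)+(1-t_1)$, and since $f(t)\to\infty$ as $t\nearrow1$ we obtain $\limsup_{t\nearrow1}F(t)/f(t)\le 1-t_1$; letting $t_1\nearrow1$ forces this limsup to be $0$, i.e. $\lim_{t\nearrow1}f(t)/F(t)=\infty$. (Item (2)(ii) in fact uses only the monotonicity and blow-up of $f$, not convexity.)
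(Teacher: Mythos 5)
Your proof is correct, and it is worth noting that for the two substantive items, (1)(i) and (1)(ii), the paper gives no argument at all: it simply cites Berchio--Gazzola and Luo. Your route is to extract from log-convexity the single clean inequality $f(\lambda^{-1}t)\ge f(t)^{1/\lambda}$ (via $\phi=\log f$, $\phi(0)=0$) and then reduce both items to one-variable calculus in $a=f(t)\ge 1$; the explicit minimization $\min_{a\ge1}\bigl(\mu^2a^{1/\mu}-a\bigr)=-(1-\mu)\mu^{-\mu/(1-\mu)}\to 0$ as $\mu\nearrow1$ checks out and makes (1)(ii) fully self-contained, which is a genuine gain over the paper. For part (2) your arguments are essentially the paper's in different clothing: the paper proves (2)(i) by showing $h(t)=\mu\{f(\mu^{-1}t)+\E\}-f(t)-\E/2$ has $h'\ge0$ on $[0,\mu]$ and $h(0)>0$, whereas you use the convex-combination bound $f(t)\le\mu f(\mu^{-1}t)+(1-\mu)$ directly and solve for the admissible range $\mu\ge(1+\E/2)/(1+\E)$ -- equivalent, and arguably more transparent; for (2)(ii) the paper uses a right-endpoint Riemann sum with $n$ pieces to get $\limsup F/f\le 1/n$, while you split $[0,t]$ at a fixed $t_1$ and let $t_1\nearrow1$, which is the same idea with a cleaner bookkeeping, and you correctly observe convexity is not needed there. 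The only soft spot is (1)(iii), which the paper dismisses as trivial: your inference ``$f>0$ and $f$ increasing, so $f'>0$'' is not literally valid as stated (one needs to invoke convexity: if $f'(t_0)=0$ then $f'\le 0$ on $(-\infty,t_0]$ by monotonicity of $f'$, forcing $f$ constant there, contradicting strict increase), but once $f'>0$ is secured your identity $f''=f\bigl((\log f)''+((\log f)')^2\bigr)>0$ closes the argument.
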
 

\begin{proof}

 See \cite{berc}, \cite{luo} for the proof of (1)-(i) and (1)-(ii).  Part (1)-(iii) is trivial.    \\
(2)-(i)  Set $ h(t):= \mu \{ f( \mu^{-1} t) +\E \} - f(t) - \frac{\E}{2}$ and note that $ h'(t) \ge 0$ for all $ 0 \le t \le \mu$ and that $ h(0)>0$ for $ \mu $ sufficiently close to $1$ which gives us the desired result.  \\
(2)-(ii)  Let $ 0 < t <1$ and we use a Riemann sum with right hand endpoints to approximate $ F(t)$.  So for any positive integer $ n$ we have 
\begin{eqnarray*}
 F(t) & \le & 
\frac{t}{n} \sum_{k=1}^n f( \frac{k t}{n}) \le \frac{t(n-1)}{n} f( \frac{(n-1)t}{n}) + \frac{t}{n} f(t),
\end{eqnarray*}

 and so 
\[ \limsup_{t \nearrow 1} \frac{F(t)}{f(t)} \le \frac{1}{n},\] but since $n$ is arbitrary we have the desired result. 

\end{proof}

The following is an essential step in proving Theorem \ref{extremal}. We give the proof of this lemma later.

\begin{lemma}\label{exist} 
Suppose that $f$ is log convex and satisfies (R) or $f$ satisfies (S). Suppose $ \E>0$ and that   $  0 \le \tau  $ is a weak solution of
\begin{eqnarray*}
\hskip 50pt \left\{ \begin{array}{lcl}
\hfill  \HA \tau&=& l(x) \qquad \text{in}\ \Omega \\
\hfill \tau&=& 0 \qquad \quad  \text{on}\ \pOm, 
\end{array}\right.
 \end{eqnarray*} 
  where $   g(x)\left( f(\tau) + \E\right) \le l(x) \in L^1(\Omega)$. Then there exists a regular energy solution of
\begin{eqnarray*}
\hskip 50pt \left\{ \begin{array}{lcl}
\hfill  \HA u&=&  g(x)\left( f(u) + \frac{\E}{2} \right)\qquad \text{in}\ \Omega \\
\hfill u&=& 0 \qquad \qquad \qquad \qquad\text{on}\ \pOm. 
\end{array}\right.
 \end{eqnarray*} 
\end{lemma}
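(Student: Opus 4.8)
The plan is to use Proposition \ref{inequality} to rescale $\tau$ into a weak supersolution of the $\tfrac{\E}{2}$-shifted problem, run a monotone iteration from $0$ underneath it, and then argue in each case that the limit is a regular energy solution. Since replacing $l$ by $g(f(\tau)+\E)$ only decreases the right-hand side, Lemma \ref{max} lets me assume $\HA\tau = g(f(\tau)+\E)$. I pick $\mu\in(0,1)$ from Proposition \ref{inequality} — part (1)-(ii) when $f$ satisfies (R), part (2)-(i) when $f$ satisfies (S) — and set $w:=\mu\tau$. In the (S) case one first notes $\tau<1$ a.e.\ (otherwise $f(\tau)$ would fail to be in $L^1$), so that $w\le\mu<1$ a.e.\ and the pointwise inequality of (2)-(i) applies with $t=w$; in the (R) case (1)-(ii) holds for all $t\ge 0$. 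Either way $\HA w = \mu\, g(f(\tau)+\E)\ge g\big(f(w)+\tfrac{\E}{2}\big)$, so $w$ is a weak supersolution of $\HA u=g(f(u)+\tfrac{\E}{2})$; and because $\mu<1$ the same computation in fact gives $\HA w\ge(1+\delta_0)\,g\big(f(w)+\tfrac{\E}{2}\big)$ for some $\delta_0>0$, i.e.\ $w$ is a weak supersolution even at parameter $1+\delta_0$.

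Next run the iteration $u_0:=0$, $\HA u_{k+1}=g(f(u_k)+\tfrac{\E}{2})$, $u_{k+1}=0$ on $\pOm$. By induction each $u_k$ is a bounded regular energy solution (the data of $u_1$ is $g(1+\tfrac{\E}{2})\in L^\infty$, and likewise for $u_{k+1}$ once $u_k$ is bounded), the sequence is nondecreasing, and $u_k\le w$ — the last point follows inductively from Lemma \ref{max} since $\HA(w-u_{k+1})\ge g(f(w)+\tfrac{\E}{2})-g(f(u_k)+\tfrac{\E}{2})\ge 0$ when $u_k\le w$. Set $u:=\lim_k u_k$. When $f$ satisfies (S) this finishes the proof: $u\le w\le\mu<1$ a.e., hence $f(u_k)\le f(\mu)$ uniformly, the data $g(f(u_k)+\tfrac{\E}{2})$ and therefore the $u_k$ are uniformly bounded in $L^\infty$ and in the energy norm, and $u$ is a regular energy solution with $\sup_\Omega u\le\mu<1$. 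When $f$ satisfies (R), monotone convergence (using $u_k\le w$, $f(\mu\tau)\le f(\tau)$ and $gf(\tau)\le l\in L^1$) shows $u$ is a weak solution of $\HA u=g(f(u)+\tfrac{\E}{2})$ with $gf(u)\in L^1$, and since any weak solution is a supersolution the iteration stays below it, so $u$ is the minimal weak solution, hence semi-stable.

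The remaining — and principal — difficulty is to prove that, in the (R) case, this minimal solution $u$ is bounded; this is genuinely subtle because $w=\mu\tau$ may itself be unbounded while only $gf(\tau)\in L^1$ is available. Here I would use the surplus $\delta_0$. Writing $L:=\HA-gf'(u)$ for the linearization at $u$ and combining $\HA w\ge(1+\delta_0)g(f(w)+\tfrac{\E}{2})$, $\HA u=g(f(u)+\tfrac{\E}{2})$ and the convexity estimate $f(w)\ge f(u)+f'(u)(w-u)$, one obtains $L(w-u)\ge\delta_0\,g\big(f(u)+\tfrac{\E}{2}\big)\ge\delta_0(1+\tfrac{\E}{2})\inf_\Omega g>0$; since $0\le w-u\not\equiv 0$ vanishes on $\pOm$, pairing with the principal eigenfunction forces the principal eigenvalue of $L$ to be strictly positive, i.e.\ $\lambda=1$ lies strictly below the extremal parameter of the $(f+\tfrac{\E}{2})$-problem and $u$ is a strictly stable solution. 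To turn this into boundedness I would approximate $f$ from below by convex truncations $f_n\nearrow f$: the corresponding minimal solutions $u^{(n)}\le w$ inherit a uniform spectral gap, since $f_n'(u^{(n)})\le f'(u)$ forces the principal eigenvalue of $\HA-gf_n'(u^{(n)})$ to be at least that of $L$, which is positive; this gap, together with the equation and the convexity identity $f_n(t)-tf_n'(t)\le 1$, should yield a uniform $L^\infty$ bound on the $u^{(n)}$ by a Moser-type iteration, following the regularity techniques of \cite{cdds} and \cite{berc}, and passing to the limit then gives the desired bounded solution. Making this last, uniform-bound step rigorous is where I expect the real work to be.
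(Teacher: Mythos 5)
Your treatment of the case where $f$ satisfies (S) is essentially the paper's proof: set $w:=\mu\tau$ with $\mu$ from Proposition \ref{inequality} (2)-(i), observe $w\le\mu<1$ so that $w$ is a \emph{bounded} supersolution, and run the monotone iteration. That part is fine (modulo the harmless imprecision that Lemma \ref{max} does not let you replace $l$ by $g(f(\tau)+\E)$ with equality; you only ever need the inequality $\HA w=\mu l\ge\mu g(f(\tau)+\E)$ anyway).

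In the case (R), however, there is a genuine gap, and it sits exactly where you say "the real work" is. The whole difficulty of the lemma is that $w=\mu\tau$ is only a \emph{weak} supersolution, possibly unbounded, while the monotone iteration machinery requires a bounded (regular energy) supersolution. Your proposed repair — extract a strict spectral gap for $L=\HA-gf'(u)$ from the surplus $1/\mu>1$, transfer it to truncated problems, and get uniform $L^\infty$ bounds by Moser iteration — is not carried out and is essentially circular: to conclude that the parameter $1$ lies strictly below the extremal parameter of the $(f+\tfrac{\E}{2})$-problem from the existence of a weak supersolution at parameter $1/\mu$, you need precisely a regularization statement of the type Lemma \ref{exist} is asserting. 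Moreover, stability alone cannot give $L^\infty$ bounds in general dimension for superlinear $f$ (this is the entire extremal-solution regularity problem), so the bound must come from the structure of $f$. The paper's proof supplies exactly this missing step using log convexity: it iterates $u_0:=\tau$, $\HA u_{k+1}=\mu g(f(u_k)+\E)$, uses Proposition \ref{inequality} (1)-(i) with $\delta=2N-1$ plus a shift by $t\phi$ (where $\HA\phi=g$) to get $u_2\le\frac{1}{2N}(u_1+t\phi)$, and then writes $f=e^{\beta}$ with $\beta$ convex, $\beta(0)=0$, to deduce
\[ f(u_2)^{2N}\le f(u_0)\, f\Bigl(\tfrac{t\phi}{1-\mu}\Bigr)^{1-\mu}. \]
This converts the mere $L^1$ information $g f(u_0)\in L^1(\Omega)$ into $g f(u_2)\in L^{2N}(\Omega)$, whence $u_3$ is bounded by elliptic regularity and the monotone iteration can be run between $0$ and $u_3$. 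Only after obtaining the bounded solution of $\HA w=\mu g(f(w)+\E)$ does one rescale by $\mu$ (as you do at the outset) to pass to the $\tfrac{\E}{2}$-problem. Without this log-convexity bootstrap, or a complete substitute for it, your argument does not close.
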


\textbf{Proof of Theorem \ref{extremal}:} Without loss of generality assume that $ \lambda^*=1$ and let $ u^*$ denote the extremal solution of $(P)_{\lambda^*}$.   Suppose that $ v$ is also a weak solution of $(P)_{\lambda^*}$ and $ v$ is not equal to $u^*$. Set $ \Omega_0:=\{ x \in \Omega:  u^*(x) \neq v(x), \;  u^*(x),v(x) \in \mathbb{R} \}$  (resp. $ \Omega_0=\{ x \in \Omega: u^*(x) \neq v(x), \; u^*(x),v(x)<1\}$)  when $f$ satisfies (R)  (resp.  (S))  
 and note that $ | \Omega_0|>0$. Define
\begin{equation*}
h(x):= \left\{
\begin{array}{lc}
\frac{  f(u^*(x))+ f(v(x))}{2} - f( \frac{u^*(x)+v(x)}{2}) & \quad x \in \Omega_0 \\
0 & \qquad  \mbox{ otherwise}.
\end{array}
\right.
\end{equation*}  Note that by the strict  convexity of $ f$, which we obtain either by hypothesis or by Proposition \ref{inequality}, we have $ 0 \le  h $ in $ \Omega$ and $ h >0$ in $ \Omega_0$. Also note that 
$ h \in L^1(\Omega)$.  Define $ z:= \frac{ u^*+v}{2}$. Since $u^*$ and $v$ are weak solutions of  $(P)_{\lambda^*}$, $z$ is a weak solution of

\[ \HA z = g(x) f(z) + g(x) h(x) \qquad \text{in }\  \Omega, \] with $z=0$ on $ \pOm$.  From now on we omit the boundary values since they will always be zero unless otherwise mentioned.  Let $\chi$ and $ \phi$ denote weak solutions of $ \HA \chi = g(x) h(x)$ and $ \HA \phi = g(x) $ in $ \Omega$.  By taking $ \E>0$ small enough one has that $ \chi \ge \E \phi$ in $ \Omega$. Set $ \tau:= z + \E \phi - \chi$ and note that  $ \tau$ is a weak solution of 
\[ \HA \tau = g(x) ( f(z) + \E) \ge 0 \qquad \text{in}\ \Omega,\]
 and by Lemma \ref{max} we have that $ 0 \le \tau$. Moreover, from the fact that $ \tau \le z$ in $\Omega$ we have  $$ g(x) ( f(\tau) + \E) \le \HA \tau \in L^1(\Omega).$$ Applying Lemma \ref{exist}, there exists a regular energy solution $u$ of
\[ \HA u = g(x) ( f(u) + \frac{\E}{2}) \qquad\text{in}\ \Omega.\]  
Set $ w:= u+ \alpha u - \frac{\E}{2} \phi$ where $ \alpha >0$ is chosen small enough such that $ \alpha u \le \frac{\E}{2} \phi$ in $ \Omega$. A straightforward computation shows that $ w$ is a regular energy solution of
\[ \HA w = (1+\alpha) g(x) f(u) +  \frac{\E}{2} \alpha g(x) \qquad\text{in}\ \Omega,\] and  that $ w \le u$ in $ \Omega$.  By  Lemma \ref{max} we also have $ 0 \le w$ in $ \Omega$.   From this we see that $ w$ is a regular energy supersolution of
\[ \HA w \ge (1+\alpha) g(x) f(w) \qquad \text{in}\ \Omega, \] with zero boundary conditions.  We now apply the monotone iteration argument   to obtain a regular energy solution $\tilde{u}$ of $ \HA \tilde{u} = (1+\alpha) g(x) f(\tilde{u}) $ in $ \Omega$  which contradicts the fact that $ \lambda^*=1$.  So, we have shown that $ | \Omega_0|=0$ and so $ u^*=v$ a.e. in $ \Omega$.

\hfill $ \Box$

\textbf{Proof of Lemma \ref{exist}:} Let $ \E >0$ and suppose that $ 0 \le \tau \in L^1(\Omega)$ is a weak solution of $ \HA \tau = l(x) $ in $ \Omega$   where $ 0 \le g(x) ( f( \tau) + \E) \le l(x) $ in $ \Omega$. 
As in the proof of Theorem \ref{extremal}, we omit the boundary values since they will always be Dirichlet boundary conditions and we also assume that $ \lambda^*=1$.  First, assume that $f$ is a log convex nonlinearity which satisfies (R). Let $ u_0:=\tau$ and let $ u_1,u_2, u_3$ be weak solutions of
\[ \HA u_1 = \mu g(x) ( f(u_0) + \E) \qquad \text{in}\ \Omega,\]
\[ \HA u_2 = \mu g(x) ( f(u_1) + \E) \qquad \text{in}\ \Omega,\]
\[ \HA u_3 = \mu g(x) ( f(u_2) + \E) \qquad \text{in}\ \Omega,\]  where $ 0 < \mu <1$ is the constant given in Proposition \ref{inequality} such that $  \mu^2 \left( f( \frac{t}{\mu}) + \E \right) \ge f(t) + \frac{\E}{2} $ for all $ t \ge 0$.   One easily sees that $ u_2 \le u_1 \le \mu u_0$.    Now note that
\begin{eqnarray}\label{sequence}
\nonumber\HA u_1 &=& \mu g(x) ( f(u_0)+\E) \\
& \ge & \mu g(x) \left( f( \frac{u_1}{\mu} ) + \E \right). 
\end{eqnarray}  

By Proposition \ref{inequality} with  $ \delta :=2N-1>0$ and $ 0<\lambda = \mu<1$  there exists some $ k>0$ such that 
\[ f( \frac{u_1}{\mu})  \ge 2N f(u_1)-k,\] 
hence one can rewrite (\ref{sequence}) as 
\begin{eqnarray*}
\HA u_1 & \ge & \mu g(x) \left( 2Nf( u_1 ) -k+ \E \right). 
\end{eqnarray*}

We let $ \phi$ be as in the proof of Theorem \ref{extremal} and examine $ u_1 + t \phi$ where $ t>0$ is to be picked later.  Note that
\begin{eqnarray*}
\HA (u_1 + t \phi)  &=& \HA u_1 + t g(x)  \\
&\ge& 2N \mu g(x) \left( f( u_1 ) + \E \right) +m  g(x),
\end{eqnarray*}  

 where $ m:= t - \mu k  + \E \mu (1-2N)$ and we now pick $ t>0$ big enough   such that $ m=0$.  Therefore, from the definition of $u_2$ we have that
\[ \HA (u_1 + t \phi) \ge 2N\  \HA u_2 \qquad\text{in}\  \Omega. \]  
So, from the maximum principle we get $$ u_2 \le \frac{1}{2N}(u_1 + t\phi) \qquad\text{in}\  \Omega.$$
    Since $f$ is log convex, there is some smooth, convex increasing function $ \beta$ with $ \beta(0)=0$ and $ f(t) = e^{\beta(t)}$.   By the convexity of $ \beta$ and since $ \beta(0)=0$, we have
\[ \beta(u_2) \le \frac{ 1}{2N} \beta(u_1 + t \phi) \le \frac{ 1}{2N} \beta(\mu u_0 + t \phi),\]
 but
\[ \beta(  \mu u_0 + t \phi) = \beta( \mu u_0 + (1-\mu)  \frac{t \phi}{1-\mu} ) \le \mu \beta(u_0) + (1-\mu) \beta( \frac{t \phi}{1-\mu}).\] 
From this we can conclude
\[ f(u_2)^{2N} \le e^{\mu \beta(u_0)} e^{(1-\mu) \beta( \frac{t \phi}{1-\mu})} \le f(u_0) f( \frac{t\phi}{1-\mu})^{1-\mu}.\]

So, we see that $ g(x) f(u_2)^{2N} \le C  g(x) f(u_0) \in L^1(\Omega)$  for some large constant $C$.

 Since $ g(x)$ is bounded,  we conclude that $ g(x) f(u_2) \in L^{2N}(\Omega)$.   But $ u_3$ satisfies  
  $ \HA u_3 = \mu g(x) ( f(u_2) +\E) $ in $ \Omega$ and so by elliptic regularity we have that $ u_3$ is bounded (since the right hand side is an element of $ L^p(\Omega)$ for some $ p>N$)  
    and  now we use the fact that $ 0 \le u_3 \le u_2$ and the monotone iteration argument to obtain a regular energy solution $w$ to $ \HA w = \mu g(x) ( f(w) +\E)$ in $ \Omega$.  

Now, set $ \xi:= \mu w$ and note that  $ \xi$ is a regular energy solution of
\[ \HA \xi= \mu^2 g(x) \left(  f(\frac{\xi}{\mu}) + \E\right) \qquad \text{in}\ \Omega, \] and  from Proposition \ref{inequality}, we have
\[ \HA \xi \ge g(x) \left( f(\xi) + \frac{\E}{2}\right) \qquad \text{in}\ \Omega, \] and so by an iteration argument, we have the desired result.

Now, assume that $f$ satisfies (S).  In this case, the proof is much simpler. Define $w:=\mu \tau$ where $ 0 < \mu <1$ is from  Proposition \ref{inequality}.   Then note that $ 0 \le w \le \mu$ a.e. and 
 
\begin{eqnarray*}
\HA w =\mu  l(x) &\ge& \mu g(x) ( f(\frac{w}{\mu}) + \E)\\
&\ge& g(x) ( f({w}) +\frac{\E}{2}) .
\end{eqnarray*} 
Hence, $w$ is a regular  energy supersolution of 
\begin{eqnarray*}
\HA w&\ge& g(x) ( f({w}) +\frac{\E}{2}),
\end{eqnarray*} 
 and we have the desired result after an application of the monotone iteration argument. 

\hfill $ \Box$

\section{Uniqueness of solutions for small $ \lambda$}\label{section3}

 In this section we prove uniqueness theorems for equation $(P)_\lambda$ for small enough $\lambda$.  Throughout this section we assume that $ g=0 $ on $ \pOm$.   We need the following regularity result.  
 
 \begin{prop} \label{reg} \cite{ct} Let $\alpha\in (0,1)$, $\Omega$ be a $C^{2,\alpha}$ bounded domain in $\mathbb{R}^N$ and suppose that $u$ is a weak solution of $ \HA u = h(x)$ in $ \Omega$ with $ u=0 $ on $ \pOm$. 
 \begin{enumerate} 
 \item[(1)]  Suppose that $ h \in L^\infty(\Omega)$.  Then $ u_e \in C^{0,\alpha}(\overline{\mathcal{C}})$ hence $ u \in C^{0,\alpha}(\overline{\Omega})$. 
 \item[(2)] Suppose that $ h \in C^{k,\alpha}(\overline{\Omega}) $ where $k=0$ or $k=1$  and $ h=0 $ on $ \pOm$.  Then $ u_e \in C^{k+1,\alpha}(\overline{\mathcal{C}})$ hence $u \in C^{k+1,\alpha}(\overline{\Omega})$   . 
 \end{enumerate} 
 \end{prop}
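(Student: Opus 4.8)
\textbf{Proof of Proposition~\ref{reg} (sketch of a self-contained argument).}
The plan is to transfer everything to the harmonic extension and read the equation as a mixed Dirichlet--Neumann boundary value problem for the Laplacian on the cylinder $\mathcal{C}=\Omega\times(0,\infty)$. The weak solution $u$ of $\HA u=h$ with $u=0$ on $\pOm$ is exactly the trace on $\Omega\times\{y=0\}$ of the function $u_e\in H^1_{0,L}(\mathcal{C})$ which is harmonic in $\mathcal{C}$, vanishes on $\partial_L\mathcal{C}=\pOm\times(0,\infty)$, and whose outer co-normal derivative on the base is $\partial_\nu u_e(x,0)=-\partial_y u_e(x,0)=h(x)$. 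From the eigenfunction expansion $u_e(x,y)=\sum_k a_k e^{-\sqrt{\lambda_k}\,y}\phi_k(x)$ one sees that $u_e$ and all its derivatives decay exponentially as $y\to\infty$; hence it suffices to prove the estimates on a truncated cylinder $\Omega\times(0,1)$ and cover $\overline{\mathcal{C}}$ by finitely many such pieces together with interior pieces. So everything reduces to local estimates near $\overline{\Omega}\times\{0\}$.

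First I would dispose of the easy regions with a partition of unity and classical estimates for harmonic functions. In the interior of $\mathcal{C}$ there is nothing to do. Near a point of the lateral boundary that is bounded away from the edge $E:=\pOm\times\{0\}$, one flattens $\pOm$ by a $C^{2,\alpha}$ diffeomorphism (leaving $y$ untouched, so the product structure survives and $\Delta$ becomes uniformly elliptic with $C^{0,\alpha}$ coefficients) and applies boundary Schauder estimates for the Dirichlet problem with zero data, giving $u_e\in C^{2,\alpha}$ there, which is more than enough. Near a point of the base bounded away from $E$, one uses the Neumann condition: after subtracting a particular solution $\psi$ realizing the co-normal data $h$ on a half-ball --- obtained from a $C^{k,\alpha}$ (resp.\ bounded) extension of $h$, so $\psi\in C^{k+1,\alpha}$ (resp.\ $\psi\in W^{2,p}$ for all $p<\infty$) --- the even reflection of $u_e-\psi$ across $\{y=0\}$ is harmonic, hence smooth. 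This yields $u_e\in C^{k+1,\alpha}$ (resp.\ $u_e\in C^{0,\alpha}$, in fact $C^{1,\beta}$ for every $\beta<1$) away from $E$.

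The heart of the matter, and what I expect to be the main obstacle, is regularity at the edge $E=\pOm\times\{0\}$, where the homogeneous Dirichlet face and the Neumann face meet at a right angle. Fixing $x_0\in\pOm$ and flattening $\pOm$ as above, one is reduced to a quarter-space $\{x_N>0,\ y>0\}$ with $u_e=0$ on $\{x_N=0\}$, $-\partial_y u_e=h$ on $\{y=0\}$, and a uniformly elliptic operator with $C^{0,\alpha}$ coefficients. The crucial point is that the opening is exactly $\pi/2$: odd reflection in $x_N$ (preserving ellipticity and the homogeneous Dirichlet condition) followed by even reflection in $y$ (preserving harmonicity once the co-normal data has been subtracted) erases the edge and leaves a flat-boundary problem of the type already treated. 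In part~(2) this reflection scheme is genuinely compatible precisely because $h=0$ on $\pOm$: the $C^{k,\alpha}$ extension of $h$ used to build $\psi$ can be chosen to vanish on $\{x_N=0\}$, which is also exactly why the hypothesis $h=0$ on $\pOm$ is needed (otherwise the quarter-plane model solution has a $\log$-type singularity at the corner, only $C^{0,\beta}$). One concludes $u_e\in C^{k+1,\alpha}$ up to $E$. In part~(1), where $h$ need not vanish on $\pOm$ and one only wants $C^{0,\alpha}$, I would instead note that the trace $u=\HAN h$ is already Hölder continuous (Riesz-potential mapping properties: $h\in L^\infty\subset L^p$ gives $u\in C^{0,1-N/p}$), and combine this with the vanishing of $u_e$ on the Dirichlet face via a suitable Hölder barrier to control the oscillation of $u_e$ near $E$.

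Finally, patching the local estimates through the partition of unity gives $u_e\in C^{k+1,\alpha}(\overline{\mathcal{C}})$ (resp.\ $u_e\in C^{0,\alpha}(\overline{\mathcal{C}})$), and restricting to $\Omega\times\{y=0\}$ yields the stated conclusion for $u$. The only genuinely delicate points are bookkeeping: checking that the $C^{2,\alpha}$ flattening of $\pOm$ keeps the coefficients Hölder while respecting the product structure in $y$, so that the right-angle double reflection at the edge is legitimate, and verifying in part~(1) that the barrier (or Riesz-potential) argument delivers the full Hölder regularity at the edge. This is essentially the content of \cite{ct}.
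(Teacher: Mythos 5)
The paper does not actually prove Proposition \ref{reg}: it is quoted from \cite{ct} (Cabr\'e--Tan), so there is no internal argument to compare yours against. Your sketch is nevertheless a faithful reconstruction of the kind of proof that reference contains: pass to the harmonic extension, use the exponential decay in $y$ coming from the eigenfunction expansion to reduce to a neighbourhood of $\overline{\Omega}\times\{0\}$, handle the interior, the lateral Dirichlet face and the Neumann base by standard Schauder and reflection arguments, and isolate the edge $\pOm\times\{0\}$ as the only delicate region, where the right angle between the Dirichlet and Neumann faces makes the double reflection (odd in the flattened normal direction, even in $y$) close up; your observation that $h=0$ on $\pOm$ is exactly what removes the resonant $r\log r$ behaviour of the model corner problem, and is therefore needed in part (2) but not in part (1), is the right diagnosis. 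For the record, \cite{ct} obtains the H\"older estimate on the trace $u=\HAN h$ somewhat differently, via the heat-semigroup representation of $\HAN$ and Dirichlet heat-kernel bounds, and then lifts it to $u_e$; your Riesz-potential remark in part (1) is the stationary analogue of that step.

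Two places in your sketch are more than ``bookkeeping'' and would have to be written out, since they sit precisely at the edge where the content of the proposition lies. First, after flattening $\pOm$ the operator is a variable-coefficient divergence-form operator, and odd reflection in $x_N$ only yields an equation with H\"older coefficients if the mixed coefficients $a_{iN}$ (for $i<N$) vanish on $\{x_N=0\}$ and all coefficients are extended with the correct parities; you must either choose the flattening so that this holds (boundary normal coordinates do it) or argue perturbatively by freezing coefficients at the edge. Second, the lifting $\psi$ must simultaneously realize the co-normal data, gain one derivative over $h$, and vanish on the Dirichlet face so that $u_e-\psi$ remains admissible for the odd reflection; the naive choice $\psi=-y\,\tilde h(x)$ does not gain a derivative, so one needs a mollified lifting such as $\psi(x,y)=-y\,(P_y\tilde h)(x)$ or the solution of an auxiliary Neumann problem. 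Neither point is an obstruction, and the overall architecture is sound, but as stated the argument stops one step short of a complete proof at the corner.
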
  
 
 Using this one easily obtains the following: 
 \begin{cor} \label{cor}   For each $ 0 < \lambda < \lambda^*$ the minimal solution of $(P)_\lambda$, $u_\lambda$,  belongs to $ C^{2,\alpha}(\overline{\Omega})$.   In addition $ u_\lambda \rightarrow 0$ in $ C^1(\overline{\Omega})$ as $ \lambda \rightarrow 0$.
 \end{cor}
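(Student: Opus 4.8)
The plan is to derive both assertions by a bootstrap on Proposition \ref{reg}, using the monotonicity of $\lambda \mapsto u_\lambda$ from Lemma \ref{basic} and the standing assumption of this section that $g = 0$ on $\pOm$.

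For the regularity statement, I would fix $0 < \lambda < \lambda^*$. Since $u_\lambda$ is a regular energy solution it is bounded (with $\sup_\Omega u_\lambda < 1$ in case (S)), so, writing $h := \lambda g f(u_\lambda)$ and using that $f$ is smooth on the relevant interval, $h \in L^\infty(\Omega)$; Proposition \ref{reg}(1) gives $u_\lambda \in C^{0,\alpha}(\overline{\Omega})$. Then $f(u_\lambda) \in C^{0,\alpha}(\overline{\Omega})$ (composition with the smooth $f$), hence $h = \lambda g f(u_\lambda) \in C^{0,\alpha}(\overline{\Omega})$ and, because $g = 0$ on $\pOm$, also $h = 0$ on $\pOm$; Proposition \ref{reg}(2) with $k = 0$ then upgrades this to $u_\lambda \in C^{1,\alpha}(\overline{\Omega})$. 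Now $f(u_\lambda) \in C^{1,\alpha}(\overline{\Omega})$, so $h$, being a product of $C^{1,\alpha}$ functions one of which vanishes on $\pOm$, lies in $C^{1,\alpha}(\overline{\Omega})$ with $h = 0$ on $\pOm$, and a second application of Proposition \ref{reg}(2), now with $k = 1$, yields $u_\lambda \in C^{2,\alpha}(\overline{\Omega})$.

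For the limit as $\lambda \to 0$, fix any $\lambda_0 \in (0, \lambda^*)$. By Lemma \ref{basic}(4) we have $0 \le u_\lambda \le u_{\lambda_0}$ for all $0 < \lambda \le \lambda_0$, so $M := \sup_\Omega u_{\lambda_0}$ is finite (and $< 1$ in case (S)) and $0 \le f(u_\lambda) \le f(M)$ uniformly in $\lambda \le \lambda_0$. First, comparing with $\lambda \zeta$, where $\HA \zeta = \AO{g}_{L^\infty} f(M)$ in $\Omega$ and $\zeta = 0$ on $\pOm$, the maximum principle (Lemma \ref{max}) gives $0 \le u_\lambda \le \lambda \zeta$, so $u_\lambda \to 0$ pointwise as $\lambda \to 0$. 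Second, $\AO{h}_{L^\infty} \le \lambda \AO{g}_{L^\infty} f(M) \le \lambda_0 \AO{g}_{L^\infty} f(M)$, so by the quantitative estimate behind Proposition \ref{reg}(1) the family $\{u_\lambda\}_{0 < \lambda \le \lambda_0}$ is bounded in $C^{0,\alpha}(\overline{\Omega})$; consequently $\{f(u_\lambda)\}$, and hence $\{h\}$, are bounded in $C^{0,\alpha}(\overline{\Omega})$ (with $h = 0$ on $\pOm$), and Proposition \ref{reg}(2) with $k = 0$ shows $\{u_\lambda\}_{0 < \lambda \le \lambda_0}$ is bounded in $C^{1,\alpha}(\overline{\Omega})$. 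By Arzel\`a--Ascoli this family is precompact in $C^1(\overline{\Omega})$, and since its only possible limit is the pointwise limit $0$, we conclude $u_\lambda \to 0$ in $C^1(\overline{\Omega})$.

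There is no substantive obstacle here; the only mild subtleties are that the hypothesis $g = 0$ on $\pOm$ (in force throughout Section 3) is exactly what permits the nonlinearity $h = \lambda g f(u_\lambda)$ to be fed into part (2) of Proposition \ref{reg}, and that for the $\lambda \to 0$ statement one should invoke the quantitative norm estimates implicit in Proposition \ref{reg} (available from \cite{ct}) rather than only the qualitative membership statements. Alternatively, one can extract the decay $\AO{u_\lambda}_{C^{1,\alpha}(\overline{\Omega})} = O(\lambda)$ directly from the linear estimates, which is a little stronger than what is claimed.
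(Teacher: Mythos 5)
Your argument is correct and is exactly the bootstrap the paper has in mind: the paper offers no written proof (it states only that the corollary follows ``easily'' from Proposition \ref{reg}), and your chain $L^\infty \Rightarrow C^{0,\alpha} \Rightarrow C^{1,\alpha} \Rightarrow C^{2,\alpha}$, using $g=0$ on $\pOm$ to feed $h=\lambda g f(u_\lambda)$ into part (2), together with the monotonicity of $\lambda \mapsto u_\lambda$ and the quantitative linear estimates from \cite{ct} for the $C^1$ convergence, supplies precisely the omitted details. Your closing observation that the linear estimates actually give $\AO{u_\lambda}_{C^{1,\alpha}(\overline{\Omega})}=O(\lambda)$ is a clean way to finish without invoking Arzel\`a--Ascoli at all.
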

 
We now come to our main theorem of this section.

\begin{thm} \label{small}   Suppose that  $ \Omega$ is a  star-shaped domain  with respect to the origin and set $ \gamma:= \sup_\Omega \frac{ x \cdot \nabla g(x)}{g(x)}$.  
\begin{enumerate}  
\item [(1)]Suppose that $f$ satisfies (R) and that 
\begin{equation} \label{sh}
\limsup_{t \rightarrow\infty}\frac{F(t)}{ f(t) t} <\frac{N-1}{2(N+\gamma)}.
\end{equation}  Then for sufficiently small $ \lambda$, $ u_\lambda$ is the unique regular energy  solution of $(P)_\lambda$.   

\item [(2)] Suppose that $ f$ satisfies (S).  Then  for sufficiently small $ \lambda$, $ u_\lambda$ is the unique   regular energy  solution $(P)_\lambda$.   

\end{enumerate} 

\end{thm}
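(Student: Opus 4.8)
The plan is to use a Pohozaev-type (variational) identity on the cylinder $\mathcal{C}$ together with the semi-stability inequality (\ref{loc_stable}), in the spirit of Mignot–Puel / Schmitt–Wang and the references \cite{jmq,schmitt,eg}. First I would establish the Pohozaev identity. Given a regular energy solution $u$ of $(P)_\lambda$ with harmonic extension $u_e \in H^1_{0,L}(\mathcal{C})$, I multiply $-\Delta u_e = 0$ by the vector field $(x,2y)\cdot \nabla u_e$ (or $(x,\beta y)\cdot\nabla u_e$ with $\beta$ to be optimized) and integrate over $\mathcal{C}$, using that $\mathcal{C} = \Omega\times(0,\infty)$ and $u_e = 0$ on $\partial_L\mathcal{C}$. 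Since $\Omega$ is star-shaped about the origin, the boundary term on the lateral part $\partial_L\mathcal{C}$ has a favorable sign (this is where $x\cdot\nu\ge 0$ on $\pOm$ enters), and the only surviving contributions come from the bottom face $\Omega\times\{y=0\}$, where $\HA u = \partial_\nu u_e$ and the nonlinearity $\lambda g(x)f(u)$ appears. After integrating by parts in $x$ on the bottom face the term $\int_\Omega (x\cdot\nabla g)\, F(u)$ and $\int_\Omega N g\, F(u)$ appear, which is why $\gamma = \sup_\Omega \frac{x\cdot\nabla g}{g}$ shows up. The outcome should be an inequality of the form
\[
\Big(\tfrac{N-1}{2}\Big)\int_{\mathcal C}|\nabla u_e|^2 \;\le\; \lambda\,(N+\gamma)\int_\Omega g(x)\,F(u) \;+\;(\text{possibly a controllable boundary/decay term}),
\]
where I must check the decay of $u_e$ as $y\to\infty$ (from the eigenfunction expansion, $u_e$ decays exponentially, so the contribution at $y=\infty$ vanishes) and that all integrations by parts are justified for regular energy solutions via Proposition \ref{reg} and Corollary \ref{cor}.

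Next I would bring in semi-stability. Every regular energy solution for small $\lambda$ is in particular the minimal one if it is semi-stable; but more to the point, I use (\ref{loc_stable}) with the test function $\phi = u_e$ itself (admissible since $u_e \in H^1_{0,L}(\mathcal{C})$) to get
\[
\int_{\mathcal C}|\nabla u_e|^2 \;\ge\; \lambda\int_\Omega g(x)\,f'(u)\,u^2 \qquad\text{— no, rather I pair this with the energy identity }\int_{\mathcal C}|\nabla u_e|^2 = \lambda\int_\Omega g(x) f(u)\,u.
\]
Actually the cleaner route: combine the energy identity $\int_{\mathcal C}|\nabla u_e|^2 = \lambda\int_\Omega g f(u)u$ with the Pohozaev inequality above to obtain
\[
\Big(\tfrac{N-1}{2}\Big)\,\lambda\int_\Omega g(x) f(u)\,u \;\le\; \lambda\,(N+\gamma)\int_\Omega g(x) F(u),
\]
i.e. $\int_\Omega g(x)\big[\tfrac{N-1}{2}f(u)u - (N+\gamma)F(u)\big] \le 0$. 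Hypothesis (\ref{sh}) says that $\tfrac{N-1}{2}f(t)t - (N+\gamma)F(t) \ge c\, f(t)t - C$ for large $t$ with $c>0$ (and this difference is bounded below on bounded sets), which forces an a priori bound $\int_\Omega g(x) f(u)\,u \le C_0$ independent of the particular solution and of $\lambda\in(0,\lambda_0)$. By elliptic regularity (Proposition \ref{reg}) and the superlinearity of $f$ this yields a uniform $L^\infty$ (indeed $C^{2,\alpha}$) bound on all regular energy solutions for $\lambda$ small, say $\sup_\Omega u \le M$. For part (2), with $f$ satisfying (S), one does not even need (\ref{sh}): any solution already satisfies $\sup_\Omega u < 1$, and for $\lambda$ small the a priori estimate improves to $\sup_\Omega u \le 1/2$, say, so $f'$ is bounded on the range of all solutions; in fact for (S) one can argue more directly that for small $\lambda$ every solution lies in a region where $\lambda g f'(u)$ is below the first eigenvalue of $\HA$, giving uniqueness at once by a linearization/maximum-principle comparison.

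Finally I would upgrade the a priori bound to uniqueness. Given two regular energy solutions $u_1, u_2$ of $(P)_\lambda$ with $0 \le u_i \le M$, convexity of $f$ gives $f(u_1)-f(u_2) \le f'(\max(u_1,u_2))(u_1-u_2)$; setting $w = u_1 - u_2$ one sees $w$ is a weak solution of $\HA w = \lambda g(x)(f(u_1)-f(u_2))$, and testing against $w_e$ on the cylinder, $\int_{\mathcal C}|\nabla w_e|^2 = \lambda\int_\Omega g(f(u_1)-f(u_2))w \le \lambda\,\|g f'\|_{L^\infty([0,M])}\int_\Omega w^2$. Since $\sup_\Omega u_i \le M$ with $M$ independent of $\lambda$ (and $f'$ bounded on $[0,M]$, or on $[0,1/2]$ in case (S)), for $\lambda$ small enough $\lambda\|g f'(\cdot)\|_\infty$ is strictly below the best constant in the Poincaré-type inequality $\int_\Omega w^2 \le C_P \int_{\mathcal C}|\nabla w_e|^2$ (which is $1/\sqrt{\lambda_1}$, coming from the trace/extension characterization of $\HA$), forcing $w_e \equiv 0$, hence $u_1 = u_2$. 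The main obstacle I anticipate is making the Pohozaev identity on the semi-infinite cylinder fully rigorous for merely regular energy solutions — controlling the behavior of $u_e$, $\nabla u_e$ at $y=\infty$ and near $\partial_L\mathcal{C}$, and justifying that $F(u) \in L^1(\Omega)$ with the right boundary terms — but Proposition \ref{reg}, Corollary \ref{cor}, and the exponential decay of the harmonic extension should supply what is needed; everything else is bookkeeping.
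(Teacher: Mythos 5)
Your Pohozaev identity on the cylinder, the role of star-shapedness, and the appearance of $\gamma$ are all set up correctly and match the paper's starting point. The genuine gap is in the middle of your argument: you apply the identity to the solution $u$ itself, obtain $\int_\Omega g\bigl[\tfrac{N-1}{2}f(u)u-(N+\gamma)F(u)\bigr]\le 0$, and then assert that this ``yields a uniform $L^\infty$ bound'' via elliptic regularity. What (\ref{sh}) actually gives you from that inequality is a bound on $\int_\Omega g\,f(u)\,u$, i.e.\ an $L^1$-type bound on the right-hand side $g f(u)$; for $\HA$ with $L^1$ data one only gets very weak integrability of $u$, and bootstrapping to $L^\infty$ requires subcritical growth. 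But (\ref{sh}) is a supercriticality condition (for $f(t)=(1+t)^p$ and $\gamma=0$ it reads $p>\tfrac{N+1}{N-1}$), so the bootstrap fails in exactly the regime the theorem covers: concentration with small energy but large sup-norm is precisely what must be ruled out, and a uniform a priori $L^\infty$ bound on \emph{all} solutions for small $\lambda$ is essentially equivalent to the theorem, not a lemma on the way to it. The same problem recurs in case (S): the claim that for small $\lambda$ every solution satisfies $\sup_\Omega u\le 1/2$ is the hard part of the statement, not a consequence of elliptic estimates --- for nonlinearities like $f(u)=(1-u)^{-2}$ the non-minimal branches have $\sup u\to 1$. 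Your final contraction step is fine \emph{given} the bound $M$, but the bound is where all the content lies.

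The paper circumvents this by applying the Pohozaev identity not to $u$ but to the difference $v=u-u_\lambda\ge 0$, which solves $\HA v=\lambda g\{f(u_\lambda+v)-f(u_\lambda)\}$. After splitting $\tfrac{N-1}{2}\int_{\mathcal C}|\nabla v_e|^2$ into a convex combination, applying the trace inequality (\ref{trace}) to one piece, and converting back through the equation, one gains a term $\tfrac{C}{\lambda}\tau^2$ in the resulting pointwise functional $T_\lambda(x,\tau)$; this term blows up as $\lambda\to 0$ and dominates the quadratic Taylor remainders for small $\tau$, while superlinearity (resp.\ Proposition \ref{inequality}, part 2(ii)) handles large $\tau$ (resp.\ $\tau+u_\lambda$ near $1$). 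That is the mechanism your plan is missing: no a priori bound is ever proved; instead $\int_\Omega g\,T_\lambda(x,v)\le 0$ combined with the pointwise positivity $T_\lambda>0$ for small $\lambda$ forces $v\equiv 0$ directly.
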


\textbf{Proof:}  Let $f$ satisfy (R) and (\ref{sh}) or let $f$ satisfy (S)  and suppose that $u$ is a second regular energy solution of $(P)_\lambda$ which is different from the minimal solution $ u_\lambda$.   
 Set $ v: =u-u_\lambda$ and note that $ v \ge 0$ by the minimality of $u_\lambda$  and $ v \neq 0$ since $u$ is different from the minimal solution.

   A computation shows that  $v$ satisfies the equation
 \begin{equation}\label{minuseq}
 (-\Delta)^\frac{1}{2} v = \lambda g(x) \left\{ f(u_\lambda +v) - f(u_\lambda) \right\}.
 \end{equation}   Applying  Proposition \ref{reg} to $u$ and $u_\lambda$ separately shows that $ v_e \in C^{2,\alpha}(\overline{ \mathcal{C}})$.

A computation shows  the following identity holds 
$$  \mbox{div} \{  (z,\nabla v_e) \nabla v_e -z \frac{|\nabla v_e|^2}{2}\}  +\frac{N-1}{2} |\nabla v_e|^2 = (z,\nabla v_e) \Delta v_e,$$
where $z=(x,y)$. Integrating this identity over $\Omega \times (0,R)$  we end up with 
\begin{equation} \label{tan}
\frac{1}{2} \int_{\pOm \times (0,R)} | \nabla v_e|^2\ x \cdot \nu + \int_{\Omega } x \cdot \nabla_x v_e \ \partial_\nu v_e + \frac{N-1}{2} \int_{\Omega \times (0,R)} | \nabla v_e|^2 + \E(R) =0,
\end{equation} where
\[ \E(R):= \int_{ \Omega \times \{y=R\}} \left( x \cdot \nabla_x v_e + R\ \partial_y v_e\right) \partial_y v_e -\frac{R}{2} | \nabla v_e|^2.\]
One can show that $ \E(R) \rightarrow 0$ as $ R \rightarrow \infty$,  for details on this and  the above calculations see \cite{TAN}.  Sending $ R \rightarrow \infty$ and since  $ \Omega$ is star-shaped with respect to the origin, we have 
\[ \frac{N-1}{2} \int_{\mathcal{C}} | \nabla v_e|^2 \le - \int_{\Omega } x \cdot \nabla_x v \ \partial_\nu v_e,\] 
and after using (\ref{minuseq}) one obtains

 \begin{eqnarray} \label{return}
 \frac{N-1 }{2}\int_{\mathcal{C}} | \nabla v_e|^2   \le  -  \lambda \int_{\Omega  } x \cdot \nabla_x v \   g(x) \{ f(u_\lambda + v) - f(u_\lambda)\}.
 \end{eqnarray}

 We now compute the right hand side of (\ref{return}).  Set $ h(x,\tau):= f(u_\lambda(x) + \tau) - f(u_\lambda(x)) $ and let $ H(x,t) = \int_0^t h(x,\tau) d \tau$.   For this portion of the proof we are working on $ \Omega$ and hence all gradients are with respect to the $x$ variable.   To clarify our notation note that the chain rule can be written as 
 \[  \nabla H(x,v) = \nabla_x H(x,v) + h(x,v) \nabla v,\] where we recall $ v=v(x)$.  
 Some computations now show that
\[ H(x,t)= F(u_\lambda +t) - F(u_\lambda) -f(u_\lambda) t, \]  and
\[ \nabla_x H(x,t) = \{ f(u_\lambda +t) - f(u_\lambda) -f'(u_\lambda) t \} \nabla u_\lambda,\]

and so the right hand side of (\ref{return}) can be written as
\begin{eqnarray*}
- \lambda \int_{\Omega  }   g(x) \{ f(u_\lambda + v) - f(u_\lambda) \}  x \cdot \nabla v &=& - \lambda \int_{\Omega }  g(x) h(x,v) x \cdot\nabla v \\
&=& -\lambda \int_{\Omega }   g(x) x \cdot \{ \nabla H(x,v) - \nabla_x H(x,v) \} \\
&=& \lambda \int_{\Omega }  g(x) x \cdot \nabla_x H(x,v) +\lambda N \int H(x,v) g(x) \\&&+ \lambda \int_{\Omega }  H(x,v) x\cdot \nabla g(x).
\end{eqnarray*}

Therefore, (\ref{return}) can be written as
\begin{eqnarray} \label{poop}
\frac{N-1 }{2}\int_{\mathcal{C}} | \nabla v_e|^2  & \le & \lambda \int_{\Omega }  x \cdot \nabla u_\lambda g(x) \{ f(u_\lambda +v) - f(u_\lambda) - f'(u_\lambda)v\} \nonumber 
   \\ &&+  N \lambda\int_{\Omega }  g(x) \{ F(u_\lambda+v) - F(u_\lambda) - f(u_\lambda)v\} \nonumber \\ &&+ \lambda \int_{\Omega }   x \cdot \nabla g(x) \{ F(u_\lambda+v) - F(u_\lambda) - f(u_\lambda)v \}. 
\end{eqnarray}   We now assume we are in case (1).    
 Let $ \alpha$ be such that 
\[ \limsup_{\tau \rightarrow \infty} \frac{F(\tau)}{ \tau f(\tau)} < \alpha < \frac{N-1}{2(N+\gamma)},\] so there exists some $ \tau_0 >0$ such that $ F(\tau) < \alpha \tau f(\tau) $ for all $ \tau \ge \tau_0$.  Let $ 0 < \theta <1$ be such that $ \frac{\theta (N-1)}{2} - \alpha (N+\gamma)>0$ and we now decompose the left hand side of (\ref{poop}) into the convex combination

\begin{equation} \label{pooooP}
 \frac{\theta (N-1)}{2}  \int_{\mathcal{C}} | \nabla v_e|^2  + \frac{(N-1)(1-\theta)}{2} \int_{\mathcal{C}} | \nabla v_e|^2.  
 \end{equation}

Using the following trace theorem: there exists some $ \tilde{C}>0$ such that  
\begin{equation} \label{trace}
\int_{\mathcal{C}} | \nabla w|^2 \ge \tilde{C} \int_\Omega w^2, \qquad \forall w \in H^1_{0,L}(\mathcal{C}),
\end{equation} 
one sees that (\ref{pooooP})
  is bounded below by  
\[ \frac{\theta (N-1)}{2}  \int_{\mathcal{C}} | \nabla v_e|^2  +  C \int_{\Omega  } v^2.\]    By taking $ C>0$ smaller if necessary one can bound this from below by 
 \[  \frac{\theta (N-1)}{2}  \int_{\mathcal{C}} | \nabla v_e|^2  +  C \int_{\Omega  } g(x) v^2, \] and after using   (\ref{minuseq}), this last quantity is equal to 
 \begin{equation} \label{zs} 
 \frac{ \lambda \theta (N-1)}{2} \int_{\Omega } g(x) \{ f(u_\lambda +v) - f(u_\lambda)\} v + C \int_{\Omega  } g(x) v^2.
 \end{equation}  Substituting (\ref{zs}) into (\ref{return}) and rearranging one arrives at an inequality of the form  
 \[ \int_{\Omega  } g(x) T_\lambda(x,v) \le 0,\]   where 
 \begin{eqnarray*}
 T_\lambda(x,\tau) &=& \frac{\theta (N-1)}{2} \{ f(u_\lambda + \tau) - f(u_\lambda)\} \tau + \frac{C}{\lambda}\tau^2 \\
 && - N \{ F(u_\lambda+\tau) - F(u_\lambda) - f(u_\lambda) \tau\} \\
 &&- \frac{x \cdot \nabla g}{g} \{ F(u_\lambda +\tau)- F(u_\lambda) - f(u_\lambda) \tau\} \\
 && -  x \cdot \nabla u_\lambda  \{ f(u_\lambda+ \tau) - f(u_\lambda) -f'(u_\lambda) \tau\}.
 \end{eqnarray*}   To obtain a contradiction we show that for sufficiently small $ \lambda>0$ that $ T_\lambda(x,\tau) >0$ on $(x,\tau) \in \Omega \times (0,\infty)$ and hence we must have that $ v=0$.    Define 
 \begin{eqnarray*}
 S_\lambda(x,\tau) &=& \frac{\theta (N-1)}{2} \{ f(u_\lambda + \tau) - f(u_\lambda)\} \tau + \frac{C}{\lambda}\tau^2 \\
 && - (N+\gamma) \{ F(u_\lambda+\tau) - F(u_\lambda) - f(u_\lambda) \tau\} \\
 && - \E_\lambda \{ f(u_\lambda+ \tau) - f(u_\lambda) -f'(u_\lambda) \tau\}.
 \end{eqnarray*}  where $ \E_\lambda:= \| \nabla u_\lambda \cdot x\|_{L^\infty}$.  Note that since $ f$ is increasing and convex that $ T_\lambda(x,\tau) \ge S_\lambda(x,\tau)$ for all $ \tau \ge 0$.   We now show the desired positivity for $ S_\lambda$ and to do this we examine large and small $ \tau$ separately.  \\
  
 Large $\tau$:  Take $ \tau \ge \tau_0$ and $ 0 < \lambda \le \frac{\lambda^*}{2}$.   Since $ f$ is convex and increasing 
 \begin{eqnarray} \label{cra}
 S_\lambda(x,\tau)& \ge & \frac{\theta (N-1)}{2} f(u_\lambda+\tau) \tau -(N+\gamma) F(u_\lambda+\tau)  \nonumber \\
 && - \E_\lambda f(u_\lambda+\tau) + \frac{C}{\lambda} \tau^2 \nonumber  \\
 &&- \frac{\theta (N-1)}{2} f(u_\lambda) \tau, 
 \end{eqnarray}  
  but $ F(u_\lambda + \tau) < \alpha (u_\lambda+ \tau) f(u_\lambda + \tau)$ for all $ \tau \ge \tau_0$ and so the right hand side of (\ref{cra}) is bounded below by 
 \begin{eqnarray*}
 && f(u_\lambda+\tau) \left[ \tau \left\{ \frac{\theta(N-1)}{2} - (N+\gamma)\alpha \right\} -\E_\lambda - (N+\gamma) \alpha u_\lambda \right]  \\
 && - \frac{\theta (N-1)}{2} f(u_\lambda) \tau + \frac{C}{\lambda} \tau^2.
 \end{eqnarray*}  Using the fact that $f$ is superlinear at $ \infty$ there exists some $ \tau_1 \ge \tau_0$ such that $ S_\lambda(x,\tau) >0$ for all $ \tau \ge \tau_1$ and $ 0 < \lambda \le \frac{\lambda^*}{2}$.    \\
 Small $ \tau$:    Let $ 0 < \lambda_0 < \frac{\lambda^*}{2}$ be such that $ \| u_\lambda\|_{L^\infty} \le 1$.  Using the convexity and monotonicity of $f$ and  Taylor's Theorem there exists some  $ C_1>0$ such that 
 \[   F(u_\lambda+\tau)-F(u_\lambda) - f(u_\lambda) \tau \le C_1 \tau^2, \qquad  f(u_\lambda+\tau) - f(u_\lambda) - f'(u_\lambda) \tau \le C_1 \tau^2,\] 
  for all $ 0 \le \tau \le \tau_0$,  $ 0 < \lambda \le \lambda_0$ and $ x \in \Omega$.   Noting that the first term of $S_\lambda(x,\tau)$ is positive for $ \tau >0$ one  sees that for all $ 0 < \tau  \le \tau_0$, $ x \in \Omega$ and $ 0 < \lambda < \lambda_0$ one has the lower bound 
  \[ S_\lambda(x,\tau) \ge \frac{C}{\lambda} \tau^2 -(N+\gamma+\E_\lambda) C_1 \tau^2,\]  and hence by taking $ \lambda $ smaller if necessary we have the desired result.  \\
  (2)  We now assume that $f$ satisfies (S).  One uses a similar approach to arrive at an inequality of the form 
  \[ \int_\Omega T_\lambda(x,v) \le 0,\] where as before $ v=u-u_\lambda \ge 0$ and where we assume that $ v \neq 0$.   
To arrive at a contradiction we  show that for sufficiently small $ \lambda$ that  $ T_\lambda(x,\tau)>0$ for all $ x \in \Omega$ and for all $ 0 < \tau < 1 -u_\lambda(x)$.   Again the idea is to break the interval into 2 regions.   For $ \tau$ such that $ \tau + u_\lambda(x)$ close to $1$ we use Proposition \ref{inequality},  2 (ii) to see the desired positivity.  For the remainder of the interval we again use Taylor's Theorem.

\hfill $ \Box$

 \end{document}